\newcommand{\blambda}{{\boldsymbol{\lambda}}}
\newcommand{\G}{\mathcal{G}}
\newcommand{\RR}{\mathbb{R}}
\newcommand{\BB}{\mathbf{B}}
\newcommand{\MM}{\mathbf{P}}
\newcommand{\NN}{\mathbb{N}}
\newcommand{\U}{\mathcal{U}}
\newcommand{\Q}{\mathcal{Q}}
\newcommand{\F}{\mathcal{F}}
\newcommand{\C}{\mathcal{C}}
\newcommand{\CC}{\mathbb{C}}
\newcommand{\minimize}{\mathrm{minimize}\;}
\newcommand{\core}{\mathbf{core}}
\newcommand{\cl}{\mathrm{cl}}
\newcommand{\ext}{\mathrm{ext}}
\newcommand{\QQ}{Q}%\mathbb{Q}}
\newcommand{\EE}{\mathbb{E}}
\newcommand{\PP}{P}%\mathbb{P}}
\renewcommand{\l}{\mathbf{L}}
\renewcommand{\u}{\underline}
\newcommand{\1}{\mathbf{1}}
\newcommand{\upi}{\underline{\pi}}
\newcommand{\opi}{\overline{\pi}}
\newcommand{\ups}{\pi_*}
\newcommand{\ops}{\pi^*}
\newcommand{\superimpose}[2]{%
  {\ooalign{$#1\@firstoftwo#2$\cr\hfil$#1\@secondoftwo#2$\hfil\cr}}}
\begin{document}

\title{Envelopes of equivalent martingale measures and a generalized no-arbitrage principle in a finite setting}
%for totally alternating or monotone conditional functional representations of an agent's preferences%\thanks{Grants or other notes
%about the article that should go on the front page should be
%placed here. General acknowledgments should be placed at the end of the article.}
\subtitle{}

\titlerunning{Envelopes of equivalent martingale measures}        % if too long for running head

\author{Andrea Cinfrignini        \and
        Davide Petturiti \and
        Barbara Vantaggi
        %etc.
}

%\authorrunning{Short form of author list} % if too long for running head

\institute{A. Cinfrignini \at
              Dept. MEMOTEF, La Sapienza University of Rome \\
              \email{andrea.cinfrignini@uniroma1.it}           %  \\
%             \emph{Present address:} of F. Author  %  if needed
\and
           D. Petturiti \at
           Dept. Economics, University of Perugia \\
              \email{davide.petturiti@unipg.it}
\and
           B. Vantaggi \at
           Dept. MEMOTEF, La Sapienza University of Rome \\
              \email{barbara.vantaggi@uniroma1.it}
}

\date{Received: date / Accepted: date}
% The correct dates will be entered by the editor

\maketitle

\begin{abstract}
We consider a one-period market model composed by a risk-free asset and a risky asset with $n$ possible future values (namely, a $n$-nomial market model).
We characterize the lower envelope of the class of equivalent martingale measures in such market model, showing that it is a belief function, obtained as the strict convex combination of two necessity measures. Then, we reformulate a general one-period pricing problem in the framework of belief functions: this allows to model frictions in the market and can be justified in terms of partially resolving uncertainty according to Jaffray. We provide a generalized no-arbitrage condition for a generic one-period market model under partially resolving uncertainty and show that the ``risk-neutral'' belief function arising in the one-period $n$-nomial market model does not satisfy such condition. Finally, we derive a generalized arbitrage-free lower pricing rule through an inner approximation of the ``risk-neutral'' belief function arising in the one-period $n$-nomial market model.

\keywords{Equivalent martingale measures \and
Belief functions\and
Generalized no-arbitrage principle \and
Lower pricing rule}
% \PACS{PACS code1 \and PACS code2 \and more}
% \subclass{MSC code1 \and MSC code2 \and more}
\end{abstract}

\section{Introduction}

As is well-known, the one-period binomial model is the most simple example of financial market showing all features of no-arbitrage pricing (see, e.g., \cite{crr,pliska}). Such model is based on two assets: a risk-free asset (bond) and a risky asset (stock), the latter having only two possible future values. Assuming no-arbitrage condition to hold, which happens under a suitable choice of parameters, this model is proved to be complete, that is there exists a unique equivalent martingale measure that allows to compute no-arbitrage prices as discounted expectations.

As soon as we allow more than two possible values for the risky asset, obtaining a {\it $n$-nomial model}, completeness is lost. In this case, the no-arbitrage condition is equivalent to the existence of an infinite class $\Q$ of equivalent martingale measures (see, e.g., \cite{pliska,cerny}).

One of the basic underlying assumptions of no-arbitrage pricing models is the absence of frictions in the market (see, e.g., \cite{dybvig}), which materializes in the linearity of the price functional. Hence, in case of an incomplete market we need to get rid of non-uniqueness of the equivalent martingale measure for reaching linearity either by completing the market with extra securities or by choosing one of the equivalent martingale measures by some suitable criterion of choice.

Decision models involving sets of probability measures have been extensively studied in the decision theory literature connoting situations of {\it ambiguity} (see, e.g., \cite{tallon-survey,gm}). The incompleteness of the market in a $n$-nomial market model generates a form of ``objective'' ambiguity as one needs to deal with the class of equivalent martingale measures.

In this paper, referring to the one-period $n$-nomial market model, we characterize the corresponding set of equivalent martingale measures for every $n > 2$. We provide a closed form expression for the lower envelope $\underline{Q}$ of the class of equivalent martingale measures, further showing that it is a {\it belief function} in the Dempster-Shafer theory of evidence \cite{dempster,shafer}. In particular, we prove that it can be expressed as the strict convex combination of two {\it necessity measures} \cite{dubprad}. In spite of the very particular form of the lower envelope, the set of equivalent martingale measure $\Q$ is shown not to coincide with $\core(\underline{Q})$ in general, that is with the set of all probability measures dominating $\underline{Q}$ (see, e.g., \cite{grabisch}).

As discussed in \cite{amihud1,amihud2}, real markets show the presence of frictions mainly in the form of bid-ask spreads and this amounts in giving up on the linearity of the price functional. Since we have a set of equivalent martingale measures $\Q$, we could think to use a suitable closed subset $\Q' \subseteq \Q$ to define a {\it lower pricing rule} as a discounted lower expectation, in a way to allow frictions in the market. The approach to pricing through lower/upper expectation functionals has been investigated in several papers (see, e.g., \cite{bensaid,elkaroui,jouni}). The choice of $\Q'$ is not free of issues since a reasonable criterion should be provided. The most natural way to get $\Q'$ is to consider a finite set of random payoffs $\G \subset \RR^\Omega$, and a {\it lower price assessment} $\upi:\G \to \RR$.
The problem is to look for a closed $\Q' \subseteq \Q$ such that
$\upi(X) = \min\limits_{Q \in \Q'} (1+r)^{-1}\EE_Q(X)$, for every $X \in \G$,
where $(1+r)$ is the risk-free return.
Unfortunately, we show that in general this problem could not have a solution.

On the other hand, the fact that $\underline{Q}$ is a belief function suggests to derive a lower pricing rule from it as a discounted Choquet expectation. We stress that, working directly in the framework of belief functions allows to incorporate naturally frictions in the market, nevertheless, for such a lower pricing rule to be acceptable the classical notion of arbitrage must be generalized.

For that, we reformulate a general one-period pricing problem over a finite state space in the framework of belief functions. We provide a generalized avoiding Dutch book condition and a generalized no-arbitrage condition for a lower price assessment
based on the {\it partially resolving uncertainty} principle proposed by  Jaffray in \cite{Jaffray-Bel}.
Adopting such principle, we allow that an agent may only acquire the information that an event $B\neq \emptyset$ occurs, without knowing which is the true state of the world $\omega \in B$. In turn, this translates in considering payoffs of portfolios on every event $B\neq \emptyset$ adopting a systematically pessimistic behavior, that is always considering the minimum of random payoffs. This is in contrast to the usual {\it completely resolving uncertainty} assumption according to which the agent will always acquire which is the true state of the world in $\Omega$.

First, we show that the generalized avoiding Dutch book condition is necessary and sufficient for the existence of a belief function whose corresponding discounted Choquet expectation functional agrees with the lower price assessment, even though the positivity of the belief function cannot be guaranteed.

Second, we prove that the proposed generalized no-arbitrage condition is equivalent to the existence of a strictly positive belief function whose corresponding discounted Choquet expectation functional agrees with the lower price assessment. The theorem we prove is the analog of the {\it first fundamental theorem of asset pricing}, formulated in the context of belief functions. In particular, our result specializes results given in \cite{ckl,cmm}, where the authors characterize an upper pricing rule that can be expressed as a discounted Choquet expectation with respect to a concave (or 2-alternating) capacity. Working with belief functions in place of 2-monotone capacities (that are dual of 2-alternating capacities), allows to introduce non-linearity departing the less from the classical no-arbitrage setting.

Concerning the original problem of deriving a lower pricing rule from the ``risk-neutral'' belief function $\underline{Q}$ arising in the $n$-nomial market model, we show that $\underline{Q}$ does not satisfy the generalized avoiding Dutch book condition, thus we propose a procedure for determining a belief function inner approximating $\underline{Q}$ and giving rise to a generalized arbitrage-free lower pricing rule. Such procedure relies on the choice of a reference equivalent martingale measure $Q_0 \in \Q$, and on the determination of an {\it inner approximating martingale belief function} $\widehat{Bel}$ for $\underline{Q}$ complying only with the stock lower price assessment $\upi(S_1) = S_0$. The latter task is achieved by minimizing a suitable distance, subject to a system of linear constraints, similarly to \cite{mmv,mmv2}.
In this way we get an {\it equivalent inner approximating martingale belief function} $\widehat{Bel}_\epsilon$ for $\underline{Q}$, as the {\it $\epsilon$-contamination} (see, e.g., \cite{huber}) of $Q_0$ with respect to $\core(\widehat{Bel})$. 
Finally, we show that if we further require $\widehat{Bel}$ to comply also with the upper price assessment $\opi(S_1) = S_0$ (arriving to an {\it inner approximating strong martingale belief function}), both $\widehat{Bel}$ and $\widehat{Bel}_\epsilon$ reduce to probability measures.

The paper is structured as follows. In Section~\ref{sec:prelim} we provide some preliminaries. Section~\ref{sec:emm} introduces the one-period market model and provides the characterization of the lower envelope $\underline{Q}$ of the set $\Q$ of equivalent martingale measures. In Section~\ref{sec:gnoarb} we formulate a general one-period pricing problem in the context of belief functions and introduce the generalized avoiding Dutch book condition and the generalized no-arbitrage condition.
Then, in Section~\ref{sec:inner} we cope with the problem of inner approximating the ``risk-neutral'' belief function $\underline{Q}$ arising in the $n$-nomial market model. Finally, Section~\ref{sec:conclusions} collects conclusions and future perspectives.

\section{Preliminaries}
\label{sec:prelim}
Let $\Omega$ be a finite non-empty set and $\F = \mathcal{P}(\Omega)$, where $\mathcal{P}(\Omega)$ denotes the power set of $\Omega$. To avoid cumbersome notation, in the rest of the paper we assume that $\Omega = \{1,\ldots,n\}$ with $n \in \NN$.

\begin{definition}
\label{def:capacity}
A function $\varphi:\F \to [0,1]$ is said a {\bf (normalized) capacity} if:
\begin{itemize}
\item[\it (i)] $\varphi(\emptyset) = 0$ and $\varphi(\Omega) = 1$;
\item[\it (ii)] $\varphi(A) \le \varphi(B)$ when $A \subseteq B$, for every $A,B \in \F$.
\end{itemize}
Further, a capacity $\varphi$ is said a:
\begin{itemize}
\item {\bf probability measure} if $\varphi(A \cup B) = \varphi(A) + \varphi(B)$, for every $A,B \in \F$ with $A \cap B = \emptyset$;
\item {\bf necessity measure} if $\varphi(A \cap B) = \min\{\varphi(A),\varphi(B)\}$, for every $A,B \in \F$;
\item {\bf belief function} if it is {\it completely monotone}, i.e.,
for every $k \ge 2$ and every $A_1,\ldots,A_k \in \F$,
$$
\varphi\left(\bigcup_{i=1}^k A_i\right) \ge \sum_{\emptyset \neq I \subseteq \{1,\ldots,k\}} (-1)^{|I|-1}
\varphi\left(\bigcap_{i \in I} A_i\right);
$$
\item {\bf (coherent) lower probability} if there exists a class $\mathcal{Q}$ of probability measures on $\F$ such that, for every $A \in \F$,
$$
\varphi(A) = \inf_{Q \in \mathcal{Q}} Q(A).
$$
\end{itemize}
\end{definition}

Probability measures and necessity measures are particular belief functions and the later are particular lower probabilities (see, e.g., \cite{grabisch}).

As shown in \cite{walley-lp}, in case of a lower probability we can always consider the closure $\cl(\Q)$ of $\Q$ in the product topology, for which it holds that, for every $A \in \F$,
$$
\varphi(A) = \min_{Q \in \cl(\Q)} Q(A).
$$

Given a capacity $\varphi:\F \to [0,1]$, we can define its {\it dual} capacity $\psi:\F \to [0,1]$ by setting, for every $A \in \F$,
\begin{equation}
\psi(A) = 1 - \varphi(A^c).
\end{equation}
%Let us stress that if $\varphi$ is a probability measure then its dual capacity is still a probability measure coinciding with $\varphi$, while t
The dual of a necessity measure is said {\it possibility measure},
the dual of a belief function is said {\it plausibility function}, and the dual of a lower probability is said {\it upper probability}.

As proved in \cite{cj-2mon}, every capacity $\varphi$ is completely characterized by its {\it M\"obius inverse} $\mu:\F \to \RR$ through the relations,
\begin{equation}
\mu(A) = \sum_{B \subseteq A} (-1)^{|A \setminus B|} \varphi(B)
\quad \mbox{and}
\quad
\varphi(A) = \sum_{B \subseteq A} \mu(B),
\end{equation}
where $A \in \F$, that imply $\mu(\emptyset) = 0$.

In this paper we will be mainly concerned with belief and plausibility functions that have been introduced by Dempster and Shafer \cite{dempster,shafer} in their {\it theory of evidence}.
Following the usual custom, a belief function is denoted by $Bel$ and its dual plausibility function by $Pl$.

In the rest of the paper $\BB(\Omega,\F)$ stands for the set of all belief functions on $(\Omega,\F)$ and $\MM(\Omega,\F)$ for the subset of all probability measures on $(\Omega,\F)$.

As already pointed out, belief functions are particular lower probabilities as they induce the closed (in the product topology) convex set of probability measures on $\F$ said {\it core}, defined as
\begin{equation}
\core(Bel) = \{Q \in \MM(\Omega,\F) \,:\, Q \ge Bel \},
\end{equation}
that satisfies, for every $A \in \F$,
$$
Bel(A) = \min_{Q \in \core(Bel)} Q(A).
$$
In particular, the M\"obius inverse of a belief function is such that $\mu(A) \ge 0$, for every $A \in \F$, and $\sum_{A \in \F} \mu(A) = 1$. Moreover, the M\"obius inverse of a probability measure can be positive only on singletons, while that of a necessity measure can be positive only on a chain of sets ordered by set inclusion (see, e.g., \cite{grabisch,shafer}).

Denote by $\RR^\Omega$ the set of all real-valued random variables defined on $\Omega$. For $A \in \F$, we denote by $\1_A : \Omega \to \{0,1\}$ the {\it indicator} of $A$, defined as $\1_A(i) = 1$ if $i \in A$ and $0$ otherwise.
In order to avoid cumbersome notation, for every $a \in \RR$, in what follows we identify $a$ with the constant random variable $a\1_\Omega$.

Given a capacity $\varphi$ on $\F$ and $X \in \RR^\Omega$, we can introduce the {\it Choquet expectation} of $X$ with respect to $\varphi$ defined (see, e.g., \cite{denneberg,grabisch}) through the Choquet integral
\begin{equation}
\CC_\varphi(X) = \sum_{i = 1}^n (X(\sigma(i)) - X(\sigma(i+1)))\varphi(E_i^\sigma),
\end{equation}
where $\sigma$ is a permutation of $\Omega$ such that $X(\sigma(1)) \ge \ldots \ge X(\sigma(n))$, $E_i^\sigma = \{\sigma(1),\ldots,\sigma(i)\}$ for $i =1,\ldots,n$, and $X(\sigma(n+1)) = 0$. In particular, if $\varphi$ reduces to a probability measure $Q$, then
$$
\CC_Q(X) = \EE_Q(X),
$$
where $\EE_Q$ denotes the usual expectation operator with respect to $Q$. For a general capacity $\varphi$, $\CC_\varphi$ is not linear, but if $X,Y \in \RR^\Omega$ are {\it comonotonic}, that is $(X(i) - X(j))(Y(i) - Y(j)) \ge 0$ for every $i,j \in \Omega$, then
$$
\CC_\varphi(X + Y) = \CC_\varphi(X) + \CC_\varphi(Y).
$$
Further, $\CC_\varphi$ is always {\it monotonic}, that is, for every $X,Y \in \RR^\Omega$ with $X \le Y$, we have that $\CC_\varphi(X) \le \CC_\varphi(Y)$.

As proved in \cite{schmeidler}, if $\varphi$ reduces to a belief function $Bel$, then
$$
\CC_{Bel}(X) = \min_{Q \in \core(Bel)}\EE_Q(X),
$$
thus $\CC_{Bel}(X)$ can be interpreted as a {\it lower expectation}. Furthermore, in this case it is also possible to provide an expression of $\CC_{Bel}(X)$ relying on the M\"obius inverse of $Bel$. At this aim, let $\U = \F \setminus \{\emptyset\}$. For every random variable $X \in \RR^\Omega$ define the function $X^\l :\U \to \RR$ setting, for every $B \in \U$,
\begin{equation}
\label{eq:lower-gen}
X^\l(B) = \min_{i \in B} X(i).
\end{equation}
If $\mu$ is the M\"obius inverse of $Bel$, then (see, e.g., \cite{gs,grabisch})
\begin{equation}
\CC_{Bel}(X) = \sum_{B \in \U} X^\l(B)\mu(B).
\end{equation}
We further have that $\CC_{Bel}$ is {\it completely monotone} (see, e.g., \cite{deCooman-book}) that is, for every $k \ge 2$ and every $X_1,\ldots,X_k \in \RR^\Omega$, it holds that
$$
\CC_{Bel}\left(\bigvee_{i=1}^k X_i\right)\ge
\sum_{\emptyset \neq I \subseteq \{1,\ldots,k\}}(-1)^{|I|-1}\CC_{Bel}\left(\bigwedge_{i\in I} X_i\right),
$$
where $\vee$ and $\wedge$ denote the pointwise minimum and maximum. In particular, $\CC_{Bel}$ is {\it superadditive}, meaning that, for every $X,Y \in \RR^\Omega$,
$$
\CC_{Bel}(X + Y) \ge \CC_{Bel}(X) + \CC_{Bel}(Y).
$$

Finally, if $Pl$ is the dual plausibility function of $Bel$, then, for every $X \in \RR^\Omega$,
\begin{equation}
\CC_{Pl}(X) = -\CC_{Bel}(-X),
\end{equation}
which turns out to be a {\it completely alternating}, and so {\it subadditive}, Choquet expectation functional (see, e.g., \cite{grabisch,deCooman-book}).

\section{One-period $n$-nomial market model}
\label{sec:emm}
For $n \ge 2$, we consider a one-period {\it $n$-nomial market model} related to times $t = 0$ and $t = 1$, composed by a risky asset (stock) and a risk-free asset (bond). The prices of the two securities are modeled by the processes $\{S_0,S_1\}$ and $\{B_0,B_1\}$ defined on the probability space $(\Omega,\F,\PP)$, with
$\Omega = \{1,\ldots,n\}$, $\PP(\{i\}) = p_i > 0$ for all $i \in \Omega$, and $\F = \mathcal{P}(\Omega)$.

The processes are adapted to the filtration $\{\F_0,\F_1\}$ with $\F_0 = \{\emptyset,\Omega\}$ and $\F_1 = \F$.
Assume $S_0 = s > 0$ and $B_0 = 1$, while the prices at the end of the period satisfy
\begin{equation}
\frac{S_1}{S_0}
=
\left\{
\begin{array}{ll}
m_1, & \mbox{with probability $p_1$},\\
m_2, & \mbox{with probability $p_2$},\\
\vdots & \\
m_n, & \mbox{with probability $p_n$},\\
\end{array}
\right.
\quad
\mbox{and}
\quad
\frac{B_1}{B_0} = 1+r,
\end{equation}
where $m_1 > m_2 > \cdots > m_n > 0$ and $1+r > 0$.

%As usual, $\QQ \in \MM(\Omega,\F)$ is said to be {\it equivalent} to $\PP$, in symbol $\QQ \sim \PP$, when $\PP(A) = 0 \Longleftrightarrow \QQ(A) = 0$, for all $A \in \F$.
To avoid cumbersome notation, in what follows, every element $\QQ \in \MM(\Omega,\F)$ is identified with the vector $\QQ \equiv (q_1,\ldots,q_n)^T \in [0,1]^n$, where $\QQ(\{i\}) = q_i$, for all $i \in \Omega$.

In this model, the set of {\it equivalent martingale measures} is defined as
\begin{equation}
\Q = \{\QQ \in \MM(\Omega,\F) \;:\; (1+r)^{-1}\EE_{\QQ}(S_1) = S_0, \QQ \sim \PP\}
\end{equation}
where $\QQ \sim \PP$ stands for $\QQ$ is {\it equivalent} to $\PP$.

As is well-known (see, e.g., \cite{pliska}), this set is not  empty if $m_1 > 1+r > m_n$, moreover, $\Q$ is convex but generally not closed.
In the particular case $n = 2$ this set reduces to the singleton $\Q = \{\QQ\}$ where
$$
\QQ \equiv \left(\frac{(1+r) - m_2}{m_1 - m_2},\frac{m_1 - (1+r)}{m_1 - m_2}\right)^T.
$$

To avoid triviality, in what follows we assume $n > 2$, moreover, we denote by $\cl(\Q)$ and $\ext(\cl(\Q))$ the {\it closure} of $\Q$ and the set of {\it extreme points} of the closure. Our aim is to investigate the properties of the lower envelope $\underline{\QQ}$ of $\cl(\Q)$ defined, for every $A \in \F$, as
\begin{equation}
\underline{\QQ}(A) = \min_{\QQ \in\cl(\Q)} \QQ(A).
\end{equation}
%which is, therefore, a lower probability.

We first provide a characterization of $\ext(\cl(\Q))$.

\begin{theorem}
\label{th:extp}
For $n > 2$, if $m_{s-1} > 1+r \geq m_s$ with $s \in \{2,\ldots,n\}$ and $1+r \ne m_n$, let $I = \{1,\ldots,s-1\}$ and $J = \{s,\ldots,n\}$, then
$$
\ext(\cl(\Q)) = \{\QQ_{i,j} \in \MM(\Omega,\F) \;:\; (i,j) \in I \times J\},
$$
where
$\QQ_{i,j} \equiv (0,\ldots,0,q_i,0,\ldots,0,q_j,0,\ldots,0)^T$, with
$$
q_i = \frac{(1+r) - m_j}{m_i - m_j}
\quad
\mbox{and}
\quad
q_j = \frac{m_i - (1+r)}{m_i - m_j}.
$$
\end{theorem}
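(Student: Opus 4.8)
The plan is to identify $\cl(\Q)$ with a concrete polytope and then read off its vertices via the standard characterization of basic feasible solutions. First I would observe that, writing $\QQ \equiv (q_1,\ldots,q_n)^T$, the martingale condition $(1+r)^{-1}\EE_\QQ(S_1) = S_0$ is equivalent to $\sum_{i=1}^n q_i m_i = 1+r$, so that
\[
\cl(\Q) = \left\{ \QQ \in [0,1]^n \;:\; \sum_{i=1}^n q_i = 1,\; \sum_{i=1}^n q_i m_i = 1+r \right\} =: P.
\]
Indeed $\Q$ is exactly the subset of $P$ with all $q_i > 0$, and under the stated hypotheses (note that $m_{s-1} > 1+r \geq m_s$ and $1+r \neq m_n$ force $m_1 > 1+r > m_n$) this subset is the nonempty relative interior of the convex set $P$; since $P$ is closed, $\cl(\Q) = \cl(\mathrm{ri}(P)) = P$. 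Thus $P$ is a compact polytope cut out by two equality constraints and the nonnegativity constraints, and the task reduces to describing $\ext(P)$.

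The key structural step is that every extreme point of $P$ has at most two nonzero coordinates. I would prove this directly: if $\QQ \in P$ had support $T$ with $|T| \geq 3$, then the $2 \times |T|$ matrix with rows $(1,\ldots,1)$ and $(m_i)_{i \in T}$ has rank at most $2 < |T|$, hence a nontrivial kernel, so there is a nonzero direction $d$ supported on $T$ with $\sum_{i \in T} d_i = 0$ and $\sum_{i \in T} d_i m_i = 0$; then $\QQ \pm \epsilon d \in P$ for small $\epsilon > 0$, exhibiting $\QQ$ as a nontrivial convex combination and contradicting extremality. Hence an extreme point is supported on a set of size at most two.

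It then remains to enumerate. For a candidate extreme point supported on $\{i,j\}$ with $i < j$, the two equalities $q_i + q_j = 1$ and $q_i m_i + q_j m_j = 1+r$ determine $q_i,q_j$ uniquely, yielding exactly the stated formulas $q_i = \frac{(1+r)-m_j}{m_i - m_j}$ and $q_j = \frac{m_i-(1+r)}{m_i-m_j}$ (the denominator being positive since $m_i > m_j$). Feasibility $q_i, q_j \geq 0$ forces $m_j \leq 1+r \leq m_i$; using $m_1 > \cdots > m_n$ together with $m_{s-1} > 1+r \geq m_s$, this is possible precisely when $i \in I$ and $j \in J$. A weighted-average argument rules out both indices lying in $I$ (forcing $\sum q_i m_i > 1+r$) or both in $J$ (forcing the sum $< 1+r$), so mixed support is the only option. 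Conversely, any $\QQ$ with support of size at most two satisfying the two equalities is automatically extreme: in a representation $\QQ = \lambda \QQ' + (1-\lambda)\QQ''$ with $\lambda \in (0,1)$ and $\QQ',\QQ'' \in P$, nonnegativity forces $\QQ',\QQ''$ to share the support of $\QQ$, on which the two equalities pin down a unique point, whence $\QQ' = \QQ'' = \QQ$. This gives $\ext(\cl(\Q)) = \{\QQ_{i,j} : (i,j) \in I \times J\}$.

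The step I expect to be most delicate is the degenerate boundary case $1+r = m_s$: there the formula yields $q_i = 0$, so every $\QQ_{i,s}$ collapses to the single point mass at state $s$, which is a one-support extreme point. I would note that this makes the parametrization $(i,j) \mapsto \QQ_{i,j}$ non-injective but leaves the \emph{set} equality intact, so the statement is unaffected; the only care needed is to confirm that this point mass lies in $P$ (which holds precisely because $m_s = 1+r$) and that no other one-support point qualifies, since $m_k = 1+r$ has the unique solution $k = s$ under strict monotonicity of the $m_k$.
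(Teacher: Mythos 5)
Your proof is correct and follows essentially the same route as the paper's: identify $\cl(\Q)$ with the polytope cut out by the two linear equality constraints and nonnegativity, and recognize the extreme points as the feasible solutions with support of size at most two. If anything, your write-up is more complete than the paper's, which only asserts that the $\QQ_{i,j}$ have ``minimal support'' and cannot be written as convex combinations of one another, whereas you supply both inclusions explicitly — the relative-interior argument for $\cl(\Q) = P$, the kernel-perturbation argument showing every extreme point has support at most two, the uniqueness argument showing each candidate is indeed extreme, and the collapse to the point mass at $s$ in the degenerate case $1+r = m_s$.
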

\begin{proof}
We have that $\QQ\equiv(q_1,\ldots,q_n)^T$ is an element of $\cl(\Q)$ if and only if it solves the system
$$
\left\{
\begin{array}{ll}
\sum_{k = 1}^n q_k = 1,\\[1.5ex]
\sum_{k = 1}^n m_k q_k = 1+r,\\[1ex]
q_k \ge 0, & \mbox{for $k = 1,\ldots,n$}.
\end{array}
\right.
$$
It is immediate to see that the coefficient matrix associated to the first two equations has full rank, so it admits infinite solutions depending on $n - 2$ real parameters.
The set of such solutions is a closed subset of $\RR^n$, while $\cl(\Q)$ is the intersection of such set with the non-negative orthant.

For every $i \in I$ and $j \in J$, $\QQ_{i,j}$ is easily shown to be a solution of the above system.
In particular, if $1+r> m_s$, we have $q_i,q_j \in (0,1)$ for every $i \in I$ and $j \in J$. 

On the other hand, if $1+r= m_s$, we have $q_i,q_j \in (0,1)$ for every $i \in I$ and $j \in J \setminus \{s\}$, while for $j = s$
$$
q_i = \frac{(1+r) - m_s}{m_i - m_s} = \frac{m_s - m_s}{m_i - m_s} =0
\quad
\mbox{and}
\quad
q_j = q_s = \frac{m_i - m_s}{m_i - m_s}=1,
$$
thus $\QQ_{i,j}$ reduces to
$$ \QQ_{i,j}=\QQ_{s} \equiv (0, \dots,0, q_s=1, 0, \dots, 0)^T.$$

The $\QQ_{i,j}$'s are extreme points since they have minimal support and none of them can be written as the convex combination of the others.
\hfill$\square$
\end{proof}

Now we provide a characterization of the lower envelope $\underline{\QQ}$.
\begin{theorem}
\label{th:lenv}
For $n > 2$, if $m_{s-1} > 1+r \geq m_s$ with $s \in \{2,\ldots,n\}$ and $1+r \ne m_n$, let $I = \{1,\ldots,s-1\}$ and $J = \{s,\ldots,n\}$, then, for every $A \in \F$,
$$
\underline{\QQ}(A)
=
\left\{
\begin{array}{ll}
1 & \mbox{if $A = \Omega$},\\[1.5ex]
\frac{(1+r) - m_{\underline{j}}}{m_1 - m_{\underline{j}}} & \begin{array}{l}
 \mbox{if $1+r \ne m_s$ and $I \subseteq A \neq \Omega$}, \\
 \mbox{or $1+r = m_s$ and $ I \cup \{s\}  \subseteq A \neq \Omega$}, \end{array}\\[2.5ex]
\frac{m_{\overline{i}} - (1+r)}{m_{\overline{i}} - m_n} & \mbox{if $J \subseteq A \neq \Omega$},\\[1.5ex]
0 & \mbox{otherwise,}
\end{array}
\right.
$$
where $\underline{j}=\min\{ j\in J\,:\, j\notin A\}$ and $\overline{i}=\max\{ i\in I\,: i \notin A \}$.
%\begin{itemize}
%\item $\underline{j}=\min\{ j\in J\,:\, j\notin A\};$ %is the minimum element of $J$ such that $\underline{j} \notin A$;
%\item $\overline{i}=\max\{ i\in I\,: i \notin A \}$. %is the maximum element of $I$ such that $\overline{i} \notin A$.
%\end{itemize}
\end{theorem}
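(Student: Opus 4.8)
The plan is to use Theorem~\ref{th:extp} to reduce the computation of $\underline{\QQ}(A)$ to a finite minimization over the extreme points of $\cl(\Q)$. For a fixed $A \in \F$ the map $\QQ \mapsto \QQ(A) = \sum_{k \in A} q_k$ is linear, and $\cl(\Q)$ is a compact convex polytope, namely the intersection of the probability simplex with the hyperplane $\sum_k m_k q_k = 1+r$ identified in the proof of Theorem~\ref{th:extp}. Since a compact convex set is the convex hull of its extreme points, the minimum of a linear functional over $\cl(\Q)$ is attained at one of them, so I would write
$$
\underline{\QQ}(A) = \min_{(i,j) \in I \times J} \QQ_{i,j}(A).
$$
As each $\QQ_{i,j}$ is supported on $\{i,j\}$ with $q_i + q_j = 1$, the value $\QQ_{i,j}(A) = q_i \1_A(i) + q_j \1_A(j)$ equals $0$, $q_i$, $q_j$, or $1$ according to whether $A$ contains neither, only $i$, only $j$, or both of $i,j$.

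The core of the argument is then a case analysis driven by the membership of $I$ and $J$ in $A$. If $A = \Omega$ every term equals $1$. If $I \not\subseteq A$ and $J \not\subseteq A$, picking $i \in I \setminus A$ and $j \in J \setminus A$ gives $\QQ_{i,j}(A) = 0$, which yields the ``otherwise'' branch. When $I \subseteq A \neq \Omega$ (so that $J \setminus A \neq \emptyset$), the relevant terms are $q_i = \frac{(1+r) - m_j}{m_i - m_j}$ for $i \in I$ and $j \in J \setminus A$; I would check, using $m_j \le 1+r < m_i$, that $\frac{\partial}{\partial m_j} q_i = \frac{(1+r) - m_i}{(m_i - m_j)^2} < 0$ and $\frac{\partial}{\partial m_i} q_i = -\frac{(1+r)-m_j}{(m_i - m_j)^2} < 0$, so the minimum is reached by maximizing both $m_i$ and $m_j$, i.e.\ at $i = 1$ and $j = \underline{j}$, producing $\frac{(1+r) - m_{\underline{j}}}{m_1 - m_{\underline{j}}}$. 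Symmetrically, when $J \subseteq A \neq \Omega$ the relevant terms are $q_j = \frac{m_i - (1+r)}{m_i - m_j}$ with $i \in I \setminus A$ and $j \in J$; here, using $m_i > 1+r > m_n$, one has $\frac{\partial}{\partial m_j} q_j = \frac{m_i - (1+r)}{(m_i - m_j)^2} > 0$ and (after fixing $j=n$) $\frac{\partial}{\partial m_i} q_j = \frac{(1+r) - m_n}{(m_i - m_n)^2} > 0$, so the minimum is attained by minimizing both, i.e.\ at $j = n$ and $i = \overline{i}$, giving $\frac{m_{\overline{i}} - (1+r)}{m_{\overline{i}} - m_n}$.

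The step I expect to require the most care is the degenerate case $1+r = m_s$. As recorded in the proof of Theorem~\ref{th:extp}, here every extreme point $\QQ_{i,s}$ collapses to the single point mass $\QQ_s$ concentrated on $s$, which lies in $\cl(\Q)$. Consequently $\QQ_s(A) = \1_A(s) = 0$ whenever $s \notin A$, forcing $\underline{\QQ}(A) = 0$ on every such $A$; this is precisely why in the second branch the hypothesis $I \subseteq A$ must be strengthened to $I \cup \{s\} \subseteq A$. I would finally verify that with this correction the four branches are mutually exclusive and exhaustive for $A \neq \Omega$ (since $I \cup J = \Omega$, the inclusions $I \subseteq A$ and $J \subseteq A$ cannot hold together unless $A = \Omega$), and that in the second branch $m_{\underline{j}} < 1+r$, so that the reported value is genuinely a probability in $(0,1)$. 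The main obstacle is thus the bookkeeping: locating the optimal indices through the two monotonicity computations and tracking the degenerate boundary case so that the regions remain consistent.
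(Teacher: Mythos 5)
Your proposal is correct and follows essentially the same route as the paper's proof: reduce $\underline{\QQ}(A)$ to a minimum over the extreme points $\QQ_{i,j}$ from Theorem~\ref{th:extp}, split into the cases $A=\Omega$, neither $I$ nor $J$ contained in $A$, $I\subseteq A\neq\Omega$, and $J\subseteq A\neq\Omega$, locate the minimizing pair by monotonicity in $m_i$ and $m_j$, and handle the degenerate case $1+r=m_s$ via the collapse of $\QQ_{i,s}$ to the point mass $\QQ_s$. The only difference is cosmetic: you verify the monotonicity with partial derivatives where the paper compares the fractions by explicit algebraic differences.
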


\begin{proof}
We first prove the case $1+r \neq m_s$.
We have that $\underline{\QQ}(A) = 1$ if and only if for all $(i,j) \in I \times J$, $\{i,j\} \subseteq A$, and this happens if and only if $A = \Omega$. Moreover, $\underline{\QQ}(A) = 0$ if and only if there exists $(i,j) \in I \times J, A \subseteq \{i,j\}^c$, and this happens if and only if $I \not\subseteq A$ and $J \not\subseteq A$.

For the remaining $A$'s, two situations can occur: either {\it (a)} $I \subseteq A\neq \Omega$ or {\it (b)} $J \subseteq A\neq \Omega$.

{\it (a)}. If $I \subseteq A\neq \Omega$, then
\begin{eqnarray*}
\underline{\QQ}(A) &=& \min_{\substack{(i,j) \in I \times J\\ \{i,j\} \subseteq A}} \left[\1_A(i) \frac{(1+r) - m_j}{m_i - m_j} + \1_A(j) \frac{m_i - (1+r)}{m_i - m_j}\right]\\
&=&
\min_{\substack{(i,j) \in I \times J\\ i \in A,j \notin A}} \frac{(1+r) - m_j}{m_i - m_j}.
\end{eqnarray*}
%\blue{since, being $I\subseteq A \neq \Omega$, one has $J\not\subseteq A$.}

Suppose $i \in I$ and let $j \in J$ be such that $j \notin A$, with $m_1 > m_i > 1+r > m_j$. Since
$$
\frac{(1+r) - m_j}{m_1 - m_j} -
\frac{(1+r) - m_j}{m_i - m_j}
= \frac{((1+r) - m_j)(m_i - m_1)}{(m_1 - m_j)(m_i - m_j)}<0
$$
we have
$\frac{(1+r) - m_j}{m_1 - m_j} <
\frac{(1+r) - m_j}{m_i - m_j}$.
Suppose $j,j' \in J$ are such that $j,j' \notin A$ with $m_1 > 1+r > m_j > m_{j'}$.
Since
$$
\frac{(1+r) - m_j}{m_1 - m_j} -
\frac{(1+r) - m_{j'}}{m_1 - m_{j'}}
= \frac{((1+r) - m_1)(m_j - m_{j'})}{(m_1 - m_j)(m_1 - m_{j'})}<0
$$
we have $\frac{(1+r) - m_j}{m_1 - m_j} <
\frac{(1+r) - m_{j'}}{m_1 - m_{j'}}$.
Hence, if $\underline{j}$ is the minimum element of $J$ such that $\underline{j} \notin A$ we have that
$$
\underline{\QQ}(A) =
\min_{\substack{(i,j) \in I \times J\\ i \in A,j \notin A}} \frac{(1+r) - m_j}{m_i - m_j} = \frac{(1+r) - m_{\underline{j}}}{m_1 - m_{\underline{j}}}.
$$

%If $1+r = m_s$ we have two cases. If $I \subseteq A$ and $s\notin A$, then $\underline{\QQ}(A)=0$ by the proof of Theorem \ref{th:extp}.
%Otherwise, if $ I \cup \{s\} \subseteq A$ and $ j \in J $ with $j\notin A$,
%the proof goes along the same line of the case $1+r\ne m_s$. %, so we have the same $\underline{\QQ}(A)$ as before with $\underline{j} \ne s$ because $s \in A$.

{\it (b)}. If $J \subseteq A\neq \Omega$, then
\begin{eqnarray*}
\underline{\QQ}(A) &=& \min_{\substack{(i,j) \in I \times J\\ \{i,j\} \subseteq A}} \left[\1_A(i) \frac{(1+r) - m_j}{m_i - m_j} + \1_A(j) \frac{m_i - (1+r)}{m_i - m_j}\right]\\
&=&
\min_{\substack{(i,j) \in I \times J\\ i \notin A,j \in A}} \frac{m_i - (1+r)}{m_i - m_j}.
\end{eqnarray*}
%\blue{since, being $J\subseteq A \neq \Omega$, one has $I\not\subseteq A$.}

Suppose $j \in J$ and let $i \in I$ be such that $i \notin A$ with $m_i > 1+r > m_j > m_n$. Since
$$
\frac{m_i - (1+r)}{m_i - m_j} -
\frac{m_i -(1+r)}{m_i - m_n}
= \frac{(m_i - (1+r))(m_j - m_n)}{(m_i - m_j)(m_i - m_n)}>0
$$
we have
$\frac{m_i - (1+r)}{m_i - m_j} >
\frac{m_i -(1+r)}{m_i - m_n}$. Suppose $i,i' \in I$ are such that $i,i' \notin A$ with $m_i > m_{i'} > 1+r > m_n$.
Since
$$
\frac{m_i - (1+r)}{m_i - m_n} -
\frac{m_{i'} -(1+r)}{m_{i'} - m_n}
=
\frac{((1+r) - m_n)(m_i - m_{i'})}{(m_i - m_n)(m_{i'} - m_n)}>0
$$
we have
$\frac{m_i - (1+r)}{m_i - m_n} >
\frac{m_{i'} -(1+r)}{m_{i'} - m_n}$.
Hence, if $\overline{i}$ is the maximum element of $I$ such that $\overline{i} \notin A$ we have that
$$
\underline{\QQ}(A) =
\min_{\substack{(i,j) \in I \times J\\ i \notin A,j \in A}} \frac{m_i - (1+r)}{m_i - m_j} =
\frac{m_{\overline{i}} - (1+r)}{m_{\overline{i}} - m_n}.
$$

Finally, we prove the case $1+r = m_s$. As before,
we have that $\underline{\QQ}(A) = 1$ if and only if $A = \Omega$. Moreover, $\underline{\QQ}(A) = 0$ if and only if $I \cup \{s\} \not\subseteq A$ and $J \not\subseteq A$.

For the remaining $A$'s, two situations can occur: either {\it (a')} $I \cup \{s\} \subseteq A\neq \Omega$ or {\it (b')} $J \subseteq A\neq \Omega$. Situation {\it (b')} coincides with {\it (b)}, thus it is proved in the same way.

{\it (a')} If $I \cup \{s\} \subseteq A \neq \Omega$, then proceeding as in the proof of {\it (a)} we have
\begin{eqnarray*}
\underline{\QQ}(A) &=& \min_{\substack{(i,j) \in (I\cup\{s\}) \times J\\ \{i,j\} \subseteq A}} \left[\1_A(i) \frac{(1+r) - m_j}{m_i - m_j} + \1_A(j) \frac{m_i - (1+r)}{m_i - m_j}\right]\\
&=&
\min_{\substack{(i,j) \in (I\cup\{s\}) \times J\\ i \in A,j \notin A}} \frac{(1+r) - m_j}{m_i - m_j} = \frac{(1+r) - m_{\underline{j}}}{m_1 - m_{\underline{j}}}.
\end{eqnarray*}
\hfill$\square$
\end{proof}

In the next theorem we characterize the M\"obius inverse of $\underline{\QQ}$.

\begin{theorem}
\label{th:mobius}
For $n > 2$, if $m_{s-1} > 1+r \geq m_s$ with $s \in \{2,\ldots,n\}$ and $1+r \ne m_n$, let $I = \{1,\ldots,s-1\}$ and $J = \{s,\ldots,n\}$.
Let $\mu:\F \to \RR$ be the M\"obius inverse of $\underline{\QQ}$. Then, for every $A \in \F$,
$$
\mu(A) =
\left\{
\begin{array}{ll}
\frac{(1+r) - m_s}{m_1 - m_s} & \mbox{if $1+r \ne m_s$ and $A = I$},\\[1.5ex]
\frac{(1+r) - m_{k+1}}{m_1 - m_{k+1}} - \frac{(1+r) - m_{k}}{m_1 - m_{k}}  & \mbox{if $A = \{1,\ldots,k\}$ and $I \subset A \neq \Omega$,}\\[1.5ex]
\frac{m_{s-1} - (1+r)}{m_{s-1} - m_n} & \mbox{if $A = J$},\\[1.5ex]
\frac{m_{k-1} - (1+r)}{m_{k-1} - m_n} - \frac{m_k - (1+r)}{m_k - m_n}  & \mbox{if $A = \{k,\ldots,n\}$ and $J \subset A \neq \Omega$},\\[1.5ex]
0 & \mbox{otherwise}.
\end{array}
\right.
$$

\end{theorem}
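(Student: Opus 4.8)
The plan is to exploit the uniqueness of the Möbius inverse. Since the two relations recalled in Section~\ref{sec:prelim} set up a bijection between capacities on $\F$ and their Möbius inverses, it suffices to take the candidate $\mu$ displayed in the statement and to verify the reconstruction identity $\underline{\QQ}(A)=\sum_{B\subseteq A}\mu(B)$ for every $A\in\F$ against the closed form of $\underline{\QQ}$ obtained in Theorem~\ref{th:lenv}; if it holds, the candidate must be \emph{the} Möbius inverse. This avoids the alternating-sign Möbius inversion formula and replaces it by a telescoping argument.

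The first observation is that the support of the candidate $\mu$ is the union of two chains: the \emph{lower} chain of initial segments $\{1,\ldots,k\}$ with $s-1\le k\le n-1$ (all of which contain $I$), and the \emph{upper} chain of terminal segments $\{k,\ldots,n\}$ with $2\le k\le s$ (all of which contain $J$). For a fixed $A$, an initial segment $\{1,\ldots,k\}$ is contained in $A$ only if $I\subseteq A$, and a terminal segment $\{k,\ldots,n\}$ is contained in $A$ only if $J\subseteq A$; moreover $I\subseteq A$ and $J\subseteq A$ hold simultaneously exactly when $I\cup J=\Omega\subseteq A$, i.e. when $A=\Omega$. This structural fact is what lets the four branches of Theorem~\ref{th:lenv} be matched one at a time, with no interaction between the two chains except at $A=\Omega$.

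Then the computation reduces to two telescoping sums. Writing the lower masses as successive differences of $k\mapsto \frac{(1+r)-m_k}{m_1-m_k}$, with $\mu(I)=\frac{(1+r)-m_s}{m_1-m_s}$ serving as the initial value, the partial sum of $\mu$ over the initial segments contained in $A$ collapses to $\frac{(1+r)-m_{\underline{j}}}{m_1-m_{\underline{j}}}$, where $\underline{j}$ is the first index of $J$ missing from $A$ (it equals one plus the length of the initial run of $A$). Symmetrically, writing the upper masses as successive differences of $k\mapsto\frac{m_k-(1+r)}{m_k-m_n}$, with $\mu(J)=\frac{m_{s-1}-(1+r)}{m_{s-1}-m_n}$ as initial value, the partial sum over terminal segments contained in $A$ collapses to $\frac{m_{\overline{i}}-(1+r)}{m_{\overline{i}}-m_n}$, where $\overline{i}$ is the last index of $I$ missing from $A$. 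Matching the cases: if $I\not\subseteq A$ and $J\not\subseteq A$ both sums are empty and $\sum_{B\subseteq A}\mu(B)=0$; if exactly one inclusion holds, a single telescoped sum survives and reproduces the corresponding branch of Theorem~\ref{th:lenv}; and if $A=\Omega$ both full sums survive and add, via $\frac{(1+r)-m_n}{m_1-m_n}+\frac{m_1-(1+r)}{m_1-m_n}=1$, to $\underline{\QQ}(\Omega)=1$.

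Finally I would dispose of the degenerate case $1+r=m_s$ with a single remark: there $\frac{(1+r)-m_s}{m_1-m_s}=0$, so the mass the formula would assign to $I$ vanishes, the first branch is correctly excluded, and the lower chain effectively starts at $\{1,\ldots,s\}=I\cup\{s\}$. This is precisely the shift from branch (a) to branch (a') in Theorem~\ref{th:lenv}; the upper chain is untouched, so the same telescoping applies verbatim. The only genuine bookkeeping obstacle is keeping the index ranges and the two boundary terms ($A=I$ and $A=J$) aligned with the endpoints of the telescoping sums; once the two chains are seen never to overlap except at $\Omega$, each branch is a one-line cancellation.
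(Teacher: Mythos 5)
Your proof is correct, and it takes a genuinely different route from the paper's. The paper computes $\mu$ \emph{forwards} from the values of $\underline{\QQ}$ given by Theorem~\ref{th:lenv}: for sets that contain $I$ (or $J$) but are not initial (terminal) segments, it evaluates the alternating-sign M\"obius inversion sum and argues that positive and negative terms cancel in equal numbers (its cases (d)--(e), the most delicate step); for the chain sets it computes the masses recursively as $\mu(\{1,\ldots,k\}) = \underline{\QQ}(\{1,\ldots,k\}) - \sum_{t<k}\mu(\{1,\ldots,t\})$; and $\mu(\Omega)=0$ is obtained only indirectly, from the normalization $\sum_{A\in\F}\mu(A)=1$ together with the fact that the two chain totals already sum to $1$. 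You instead run the argument \emph{backwards}: you posit the candidate $\mu$, observe that its support consists of two chains which can both contribute to $\sum_{B\subseteq A}\mu(B)$ only when $A=\Omega$, verify the sign-free reconstruction identity $\underline{\QQ}(A)=\sum_{B\subseteq A}\mu(B)$ by two telescoping sums, and conclude by uniqueness of the M\"obius inverse, which is guaranteed by the bijection recalled in the paper's preliminaries. What your route buys: no alternating signs, no cancellation-counting argument, and the case $A=\Omega$ falls out of the same telescoping identity $\frac{(1+r)-m_n}{m_1-m_n}+\frac{m_1-(1+r)}{m_1-m_n}=1$ rather than requiring the normalization trick. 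What the paper's route buys: it is derivational, producing the formula rather than verifying a guessed one. Your handling of the degenerate case $1+r=m_s$ is also sound: since the formula's mass at $I$ is then $0$, excluding that branch is consistent, the lower chain effectively starts at $I\cup\{s\}$, and the unchanged telescoping reproduces branch (a') of Theorem~\ref{th:lenv} exactly.
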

\begin{proof}
We first prove the case $1+r \neq m_s$, by considering all the possibilities for $A \in \F$.

{\it (a)}.
If $I \not\subseteq A$ and $J \not\subseteq A$, then $\mu(A) = 0$. Indeed, by Theorem~\ref{th:lenv} we have that $\underline{\QQ}(B) = 0$ for every $B \subseteq A$ and this implies $\mu(B) = 0$ for every $B \subseteq A$.

{\it (b)}. If $A=I$, then by Theorem~\ref{th:lenv} the only $B \subseteq A$ with $\underline{Q}(B) \ne 0$ is $ B=A=I$. Hence, $$ \mu(I) = \underline{\QQ}(I) = \frac{(1+r)-m_{s}}{m_1-m_s},$$ as $s$ is the minimum element of $J$ not in $A=I$.

{\it (c)}. If $A=J$, then by Theorem~\ref{th:lenv} the only $ B \subseteq A$ with $\underline{Q}(B) \ne 0$ is $ B=A=J$. Hence, $$ \mu(J) = \underline{\QQ}(J) = \frac{m_{s-1}-(1+r)}{m_{s-1}-m_n},$$ as $s-1$ is the maximum element of $I$ not in $A=J$.

{\it (d)}. If $A \neq \{1,\ldots,k\}$ and $I \subset A \neq \Omega$, then $\mu(A) = 0$.
To see this, let $A =\{1,\ldots,k\} \cup B$ with $I \subseteq \{1,\ldots,k\} \neq \Omega$, $B \neq \emptyset$, and $B \cap \{1,\ldots,k+1\} = \emptyset$. Since for all $E \subseteq A$ not containing $I$ we have $\underline{\QQ}(E) = 0$, we can write
$$
\mu(A) = \sum_{I \subseteq E \subseteq A}(-1)^{|A \setminus E|}\underline{\QQ}(E).
$$
For every $s-1 \le t \le k$, if $E$ contains $\{1,\ldots,t\}$ but not $\{1,\ldots,t+1\}$, it follows that 
$\underline{\QQ}(E) = \frac{(1+r) - m_{t+1}}{m_1 - m_{t+1}}$ and all of such sets are of the form $\{1,\ldots,t\} \cup C$ with $C \subseteq F$, where $F = \{t+2,\ldots,k\} \cup B$ if $t+2 \le k$ and $F = B$ otherwise. Moreover, we have
$$
\begin{array}{l}
\underline{\QQ}(\{1,\ldots,t\} \cup F) 
- \sum_{\substack{D \subseteq F\\|D| = |F| - 1}} \underline{\QQ}(\{1,\ldots,t\} \cup D)\\
\quad + \sum_{\substack{D \subseteq F\\|D| = |F| - 2}} \underline{\QQ}(\{1,\ldots,t\} \cup D)
+ \cdots
+ (-1)^{|F|}\underline{\QQ}(\{1,\ldots,t\}) = 0,
\end{array}
$$
since all terms are equal in absolute value and the number of positive terms is equal to that of negative terms.
In turn, this implies that $\mu(A) = 0$.

{\it (e)}. If $A \neq \{k,\ldots,n\}$ and $J \subset A \neq \Omega$, then $\mu(A) = 0$. The proof of this claim is analogous to point {\it (d)}.

{\it (f)}. If $A=\{1,\dots, k\}$ and $I \subset A \neq \Omega$, i.e., $s \leq k \leq n-1$, then, taking into account points {\it (a)}--{\it (e)},
$$
\underline{\QQ}(A) = \sum_{t = s-1}^k \mu(\{1,\ldots,t\}) = \frac{(1+r) - m_{k+1}}{m_1 - m_{k+1}}.
$$
Hence, we have that
$$
\mu(A) = \underline{\QQ}(A) - \sum_{t = s-1}^{k-1} \mu(\{1,\ldots,t\}) = \frac{(1+r) - m_{k+1}}{m_1 - m_{k+1}} - \frac{(1+r) - m_k}{m_1 - m_k}.
$$

{\it (g)}. If $A=\{k, \dots, n\}$ and $J \subset A \neq \Omega$, i.e., with $2 \leq k \leq s-1$, then proceeding as in point {\it (f)} we get
$\mu(A) = \frac{m_{k-1} - (1+r)}{m_{k-1} - m_n} - \frac{m_k - (1+r)}{m_k - m_n}$.

{\it (h)}. If $A = \Omega$, then $\mu(\Omega) = 0$. Indeed, by points {\it (a)}--{\it (g)}, for every $A \in \mathcal{F}\setminus \{ \Omega \}, \mu(A) \geq 0$ and, in particular, $\mu$ is strictly positive on the families
\begin{eqnarray*}
 \mathcal{C}_1 &=& \left\{ \{1, \dots , s-1 \}, \{ 1, \dots, s\}, \dots , \{ 1, \dots, n-1 \} \right\}, \\
 \mathcal{C}_2 &=& \left\{ \{s, \dots , n \}, \{ s-1, \dots, n\}, \dots , \{ 2, \dots, n \} \right\},
 \end{eqnarray*}
while it is $0$ otherwise. By the properties of the M\"{o}bius inverse, it must be $\sum_{A \in \mathcal{F}} \mu(A) =1$, and since
$$ \sum_{A \in \mathcal{F}\setminus \{\Omega\}} {\mu (A)} = \sum_{A \in \mathcal{C}_1} {\mu(A)} + \sum_{A \in \mathcal{C}_2} {\mu(A)}  = \frac{(1+r)-m_n}{m_1-m_n} - \frac{m_1-(1+r)}{m_1-m_n} =1,$$
it follows that $\mu(\Omega)=0$.

Finally, we prove the case $1+r = m_s$. Proceeding as in points {\it (a)}--{\it (g)} by taking $I \cup \{s\}$ in place of $I$, it is possible to show that        $\mu$ is strictly positive on the families $\mathcal{C}_2$ and
$$ \mathcal{C}'_1 = \{ \{1,\ldots, s\}, \ldots, \{1,\ldots, n-1\}\},$$ while it is $0$ on $\F \setminus (\{\Omega\} \cup \C_1' \cup \C_2)$. Thus, in analogy to point {\it (h)}, since
 $$ \sum_{A \in \mathcal{F}\setminus\{\Omega\}} \mu(A) = \sum_{A \in \mathcal{C}'_1} \mu(A) + \sum_{A \in \mathcal{C}_2} \mu(A) = 1,$$ it follows that $\mu(\Omega) = 0$.
\hfill$\square$
\end{proof}

The previous theorem implies that $\underline{\QQ}$ is completely monotone (i.e., a belief function) and furthermore it can be expressed as the strict convex combination of two necessity measures defined on $\F$.

% Recall that a function $N:\F \to [0,1]$ is a {\it necessity measure} if:
%\begin{itemize}
%\item[\it (i)] $N(\emptyset) = 0$ and $N(\Omega) = 1$;
%\item[\it (ii)] $N(A \cap B) = \min\{N(A),N(B)\}$, for every $A,B \in \F$.
%\end{itemize}
%Notice that necessity measures are particular belief functions \cite{shafer,grabisch}.

%In what follows, we consider $ m_{s-1} > 1+r >m_s$.
\begin{corollary}
\label{cor:lower}
The lower probability $\underline{\QQ}$ satisfies the following properties:
\begin{itemize}
\item[\it (i)] $\underline{\QQ}$ is completely monotone, that is, for every $k \ge 2$ and every $A_1,\ldots,A_k \in \F$, it holds that
$$
\underline{\QQ}\left(\bigcup_{i = 1}^k A_i \right)
\ge
\sum_{\emptyset \neq I \subseteq \{1,\ldots,k\}} (-1)^{|I| + 1} \underline{\QQ}\left(\bigcap_{i \in I} A_i\right).
$$
\item[\it (ii)] there exist two necessity measures $N_1,N_2:\F \to [0,1]$ and $\alpha \in (0,1)$ such that, for every $A \in \F$,
$$\underline{\QQ}(A) = \alpha N_1(A) + (1-\alpha)N_2(A).$$
\end{itemize}
\end{corollary}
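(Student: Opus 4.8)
The plan is to read both statements directly off the M\"obius inverse $\mu$ of $\underline{\QQ}$ computed in Theorem~\ref{th:mobius}, using the correspondence between a capacity and its M\"obius inverse recalled in the preliminaries. The two facts I would invoke are: a capacity is completely monotone (a belief function) if and only if its M\"obius inverse is nonnegative; and a capacity is a necessity measure if and only if its M\"obius inverse is nonnegative and supported on a chain of sets ordered by inclusion. Since $\varphi(A) = \sum_{B \subseteq A}\mu(B)$ depends linearly on $\mu$, a convex combination of M\"obius inverses induces the corresponding convex combination of capacities, which is what makes the decomposition in part \emph{(ii)} possible.

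For part \emph{(i)}, I would simply check that every value listed in Theorem~\ref{th:mobius} is nonnegative. On the chain $\C_1$ (resp.\ $\C_1'$) the generic mass has the form $\frac{(1+r)-m_{k+1}}{m_1-m_{k+1}} - \frac{(1+r)-m_k}{m_1-m_k}$, which is positive because the map $j \mapsto \frac{(1+r)-m_j}{m_1-m_j}$ is strictly increasing in the index $j$ --- exactly the monotonicity inequality already established in part \emph{(a)} of the proof of Theorem~\ref{th:lenv}; the masses on $I$ and $J$ are visibly positive since $m_1 > 1+r > m_n$, and symmetrically the masses on $\C_2$ are positive by the inequality in part \emph{(b)} of that proof. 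Together with $\mu = 0$ off the two chains and $\sum_{A \in \F}\mu(A) = 1$ (already shown in point \emph{(h)} of the proof of Theorem~\ref{th:mobius}), nonnegativity of $\mu$ yields complete monotonicity.

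For part \emph{(ii)}, I would split the mass of $\mu$ across the two chains. Set $\alpha = \sum_{A \in \C_1}\mu(A)$ (or $\sum_{A \in \C_1'}\mu(A)$ when $1+r = m_s$), so that $1 - \alpha = \sum_{A \in \C_2}\mu(A)$ by the total-mass computation of point \emph{(h)}. Since the masses on $\C_1$ telescope, this sum collapses to the value of $\underline{\QQ}$ on the top element $\{1,\ldots,n-1\}$, namely $\alpha = \frac{(1+r)-m_n}{m_1-m_n}$, which lies in $(0,1)$ because $m_1 > 1+r > m_n$. I would then define $N_1$ as the capacity whose M\"obius inverse is $\frac{1}{\alpha}\mu$ on $\C_1$ and $0$ elsewhere, and $N_2$ as the capacity whose M\"obius inverse is $\frac{1}{1-\alpha}\mu$ on $\C_2$ and $0$ elsewhere. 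Each has a nonnegative M\"obius inverse supported on a single chain and summing to $1$, hence $N_1$ and $N_2$ are necessity measures; and by linearity of the M\"obius-to-capacity map, the identity $\mu = \alpha\,\mu_{N_1} + (1-\alpha)\,\mu_{N_2}$ gives $\underline{\QQ}(A) = \alpha N_1(A) + (1-\alpha)N_2(A)$ for every $A \in \F$.

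I expect no serious obstacle, since everything is bookkeeping on top of Theorem~\ref{th:mobius}; the only points requiring care are recognizing the telescoping that evaluates $\alpha$ in closed form (rather than summing the masses one by one) and verifying the sufficiency direction of the chain characterization of necessity measures --- namely that a nonnegative mass assignment concentrated on a nested family $C_1 \subset \cdots \subset C_m$ induces a capacity satisfying $\varphi(A \cap B) = \min\{\varphi(A),\varphi(B)\}$, which holds because $\{i : C_i \subseteq A\}$ is always an initial segment of the chain and the partial sums are monotone.
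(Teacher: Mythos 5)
Your proposal is correct and follows essentially the same route as the paper: part \emph{(i)} from the nonnegativity of the M\"obius inverse given by Theorem~\ref{th:mobius}, and part \emph{(ii)} by splitting the mass over the two chains $\C_1$ and $\C_2$, normalizing by $\alpha = \sum_{A \in \C_1}\mu(A) = \frac{(1+r)-m_n}{m_1-m_n}$ to obtain the two necessity measures. The extra details you supply (the telescoping evaluation of $\alpha$, the explicit monotonicity checks, and the sufficiency direction of the chain characterization) are points the paper handles by citation or by reference to its earlier proofs, so they are consistent additions rather than a different argument.
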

\begin{proof}
Statement {\it (i)} is an immediate consequence of Theorem~\ref{th:mobius} since $\mu(A) \ge 0$, for every $A \in \F$ (see \cite{cj-2mon,grabisch}). For statement {\it (ii)}, we prove only the case $1+r \neq m_s$ as the other case can be proved similarly. By Theorem~\ref{th:mobius}, the focal elements of $\mu$ form two chains ordered by set inclusion:
\begin{eqnarray*}
\C_1 &=& \{\{1,\ldots,s-1\}, \{1,\ldots,s\}, \ldots, \{1,\ldots,n-1\}\},\\
\C_2 &=& \{\{s,\ldots,n\}, \{s-1,\ldots,n\}, \ldots, \{2,\ldots,n\}\}.
\end{eqnarray*}
Let $\alpha = \sum_{A \in \C_1} \mu(A) = \frac{(1+r) - m_n}{m_1 - m_n}$ and $(1-\alpha) = \sum_{A \in \C_2} \mu(A) = \frac{m_1 - (1+r)}{m_1 - m_n}$ and define $\mu_1,\mu_2$ on $\F$ setting, for every $A \in \F$,
$$
\mu_1(A) =
\left\{
\begin{array}{ll}
\frac{\mu(A)}{\alpha} & \mbox{if $A \in \C_1$},\\[1.5ex]
0 & \mbox{otherwise},
\end{array}
\right.
\quad
\mbox{and}
\quad
\mu_2(A) =
\left\{
\begin{array}{ll}
\frac{\mu(A)}{1-\alpha} & \mbox{if $A \in \C_2$},\\[1.5ex]
0 & \mbox{otherwise}.
\end{array}
\right.
$$
A simple verification shows that $\mu_1,\mu_2$ are non-negative M\"obius inverses with nested focal elements, so they induce, respectively, two necessity measures $N_1,N_2$ on $\F$ (see \cite{grabisch}). Then, by construction we have that, for every $A \in \F$,
$$\underline{\QQ}(A) = \alpha N_1(A) + (1-\alpha)N_2(A).$$
\hfill$\square$
\end{proof}

The following example shows the representation of the lower envelope $\underline{Q}$ as a strict convex combination of two necessity measures.

\begin{example}
\label{ex:lp}
Let $\Omega = \{1,2,3,4\}$ and $m_1 = 4$, $m_2 = 2$, $m_3 = \frac{1}{2}$, $m_4 = \frac{1}{4}$, and $1+r = 1$. To avoid cumbersome notation, we denote events omitting braces and commas. In this case we have $I = \{1,2\}$, $J = \{3,4\}$ and $\ext(\cl(\Q)) = \{\QQ_{1,3}, \QQ_{1,4}, \QQ_{2,3},\QQ_{2,4}\}$ inducing the $\underline{\QQ}$ reported below

\begin{center}
\begin{tabular}{c|cccccccccccccccc}
$\F$ & $\emptyset$ & $1$ & $2$ & $3$ & $4$ & $12$ & $13$ & $14$ & $23$ & $24$ & $34$ & $123$ & $124$ & $134$ & $234$ & $\Omega$\\
\hline
\\[-1.5ex]
$\QQ_{1,3}$
& $0$ & $\frac{15}{105}$ & $0$ & $\frac{90}{105}$ & $0$
& $\frac{15}{105}$ & $1$ & $\frac{15}{105}$ & $\frac{90}{105}$ & $0$ & $\frac{90}{105}$ &
$1$ & $\frac{15}{105}$ & $1$ & $\frac{90}{105}$ & $1$
\\[1ex]
$\QQ_{1,4}$
& $0$ & $\frac{21}{105}$ & $0$ & $0$ & $\frac{84}{105}$
& $\frac{21}{105}$ & $\frac{21}{105}$ & $1$ & $0$ & $\frac{84}{105}$ & $\frac{84}{105}$ &
$\frac{21}{105}$ & $1$ & $1$ & $\frac{84}{105}$ & $1$
\\[1ex]
$\QQ_{2,3}$
& $0$ & $0$ & $\frac{35}{105}$ & $\frac{70}{105}$ & $0$
& $\frac{35}{105}$ & $\frac{70}{105}$ & $0$ & $1$ & $\frac{35}{105}$ & $\frac{70}{105}$ &
$1$ & $\frac{35}{105}$ & $\frac{70}{105}$ & $1$ & $1$
\\[1ex]
$\QQ_{2,4}$
& $0$ & $0$ & $\frac{45}{105}$ & $0$ & $\frac{60}{105}$
& $\frac{45}{105}$ & $0$ & $\frac{60}{105}$ & $\frac{45}{105}$ & $1$ & $\frac{60}{105}$ &
$\frac{45}{105}$ & $1$ & $\frac{60}{105}$ & $1$ & $1$\\[1ex]
\hline
\\[-1.5ex]
$\underline{\QQ}$
& $0$ & $0$ & $0$ & $0$ & $0$
& $\frac{15}{105}$ & $0$ & $0$ & $0$ & $0$ & $\frac{60}{105}$ &
$\frac{21}{105}$ & $\frac{15}{105}$ & $\frac{60}{105}$ & $\frac{84}{105}$ & $1$\\[1ex]
$\mu$
& $0$ & $0$ & $0$ & $0$ & $0$
& $\frac{15}{105}$ & $0$ & $0$ & $0$ & $0$ & $\frac{60}{105}$ &
$\frac{6}{105}$ & $0$ & $0$ & $\frac{24}{105}$ & $0$
\end{tabular}
\end{center}

We have that $\C_1 = \{12,123\}$ and $\C_2 = \{34,234\}$, with
$$
\alpha = \mu(12) + \mu(123) = \frac{21}{105}
\quad
\mbox{and}
\quad
1-\alpha = \mu(34) + \mu(234) = \frac{84}{105}.
$$
The M\"obius inverses $\mu_1,\mu_2$ and the corresponding necessity measures $N_1,N_2$ are defined below
\begin{center}
\begin{tabular}{c|cccccccccccccccc}
$\F$ & $\emptyset$ & $1$ & $2$ & $3$ & $4$ & $12$ & $13$ & $14$ & $23$ & $24$ & $34$ & $123$ & $124$ & $134$ & $234$ & $\Omega$\\
\hline
\\[-1.5ex]
$\mu_1$ & $0$ & $0$ & $0$ & $0$ & $0$ & $\frac{15}{21}$
& $0$ & $0$ & $0$ & $0$ & $0$ & $\frac{6}{21}$ & $0$ & $0$ & $0$ & $0$
\\[1ex]
$N_1$ & $0$ & $0$ & $0$ & $0$ & $0$ & $\frac{15}{21}$
& $0$ & $0$ & $0$ & $0$ & $0$ & $1$ & $\frac{15}{21}$ & $0$ & $0$ & $1$
\\[1ex]
$\mu_2$ & $0$ & $0$ & $0$ & $0$ & $0$ & $0$
& $0$ & $0$ & $0$ & $0$ & $\frac{60}{84}$ & $0$ & $0$ & $0$ & $\frac{24}{84}$ & $0$
\\[1ex]
$N_2$ & $0$ & $0$ & $0$ & $0$ & $0$ & $0$
& $0$ & $0$ & $0$ & $0$ & $\frac{60}{84}$ & $0$ & $0$ & $\frac{60}{84}$ & $1$ & $1$
\\[1ex]
\hline
\\[-1.5ex]
$\alpha N_1 + (1-\alpha)N_2$
& $0$ & $0$ & $0$ & $0$ & $0$
& $\frac{15}{105}$ & $0$ & $0$ & $0$ & $0$ & $\frac{60}{105}$ &
$\frac{21}{105}$ & $\frac{15}{105}$ & $\frac{60}{105}$ & $\frac{84}{105}$ & $1$
\end{tabular}
\end{center}
\hfill$\blacklozenge$
\end{example}

Even though $\underline{Q}$ is a belief function, we have that, for $n > 2$, $\cl(\Q) \neq \core(\underline{Q})$ in general, as shown in the following example.

\begin{example}
Consider $\Omega = \{1,2,3,4\}$, $m_1 = 4$, $m_2 = 2$, $m_3 = \frac{1}{2}$, $m_4 = \frac{1}{4}$, $1+r = 1$, $\Q$ and $\underline{Q}$ of Example~\ref{ex:lp}.

A straightforward computation shows that $\cl(\Q) \neq \core(\underline{Q})$, since (see, e.g., \cite{schmeidler2,grabisch}), assuming $S_0 = s > 0$,
$$
\CC_{\underline{Q}}\left(\frac{S_1}{S_0}\right) = \frac{54}{105} < 1 = \min_{Q \in \cl(\Q)} \EE_Q\left(\frac{S_1}{S_0}\right).
$$
In particular, taking the permutation $\sigma = \langle 1,2,3,4 \rangle$ and defining the probability measure
\begin{eqnarray*}
\QQ^\sigma &\equiv& \left(\underline{Q}(1), \underline{Q}(12) - \underline{Q}(1), \underline{Q}(123) - \underline{Q}(12), \underline{Q}(1234) - \underline{Q}(123)\right)^T\\
&\equiv& \left(0,\frac{15}{105},\frac{6}{105},\frac{84}{105}\right)^T
\end{eqnarray*}
we have that $\QQ^\sigma\notin \cl(\Q)$ which further proves that
$\cl(\Q) \neq \core(\underline{Q})$ (see, e.g., \cite{grabisch}).
\hfill$\blacklozenge$
\end{example}

As is well-known (see, e.g., \cite{delbschac}), given a random variable $X \in \RR^\Omega$ expressing the payoff at time $t = 1$ of a contract, its no-arbitrage price at time $t=0$ can be computed relying on the set of equivalent martingale measures $\Q$, by computing
\begin{equation}
\ups(X) = \min_{Q \in \cl(\Q)} (1+r)^{-1}\EE_Q(X)
\quad \mbox{and} \quad
\ops(X) = \max_{Q \in \cl(\Q)} (1+r)^{-1}\EE_Q(X).
\end{equation}
It holds that (see, e.g., \cite{pliska,cerny}):
\begin{itemize}
\item if $\ups(X) = \ops(X)$, then their common value $\pi(X)$ is the no-arbitrage price at time $t=0$ of $X$;
\item if $\ups(X) < \ops(X)$, then the no-arbitrage price $\pi(X)$ at time $t=0$ of $X$ belongs to the open interval $(\ups(X),\ops(X))$.
\end{itemize}

If we have two contracts with payoffs $X,Y \in \RR^\Omega$ at time $t=1$, then their no-arbitrage price intervals are
$$
(\ups(X),\ops(X))
\quad \mbox{and} \quad
(\ups(Y),\ops(Y)),
$$
nevertheless, we are not free to choose a value in one interval independently of the other, as shown in the following example.

\begin{example}
\label{ex:assessment}
Let $\Omega = \{1,2,3\}$, $m_1 = 4$, $m_2 = 2$,
$m_3 = \frac{1}{2}$, $1+r = 1$ and $S_0 = 20$. In this case we have $I = \{1,2\}$, $J = \{3\}$ and $\ext(\cl(\Q)) = \{\QQ_{1,3}, \QQ_{2,3}\}$ inducing the $\underline{\QQ}$ reported below
\begin{center}
\begin{tabular}{c|cccccccc}
$\F$ & $\emptyset$ & $1$ & $2$ & $3$ & $12$ & $13$ & $23$ & $\Omega$\\
\hline
\\[-1.5ex]
$Q_{1,3}$
& $0$ & $\frac{21}{105}$ & $0$ & $\frac{84}{105}$ & $\frac{21}{105}$
& $1$ & $\frac{84}{105}$ & $1$
\\[1ex]
$Q_{2,3}$
& $0$ & $0$ & $\frac{45}{105}$ & $\frac{60}{105}$ & $\frac{45}{105}$
& $\frac{60}{105}$ & $1$ & $1$\\[1ex]
\hline
\\[-1.5ex]
$\underline{Q}$
& $0$ & $0$ & $0$ & $\frac{60}{105}$ & $\frac{21}{105}$
& $\frac{60}{105}$ & $\frac{84}{105}$ & $1$
\\[1ex]
$\mu$
& $0$ & $0$ & $0$ & $\frac{60}{105}$ & $\frac{21}{105}$
& $0$ & $\frac{24}{105}$ & $0$
\end{tabular}
\end{center}

Consider the following payoffs at time $t = 1$
\begin{center}
\begin{tabular}{c|ccc}
$\Omega$ & $1$ & $2$ & $3$\\
\hline
\\[-1.5ex]
$X$ & $20$ & $10$ & $10$\\[1ex]
$Y$ & $10$ & $10$ & $20$\\
\end{tabular}
\end{center}

We have that
\begin{eqnarray*}
\ups(X) &=& \min\{\EE_{Q_{1,3}}(X), \EE_{Q_{2,3}}(X)\} = 10,\\
\ops(X) &=& \max\{\EE_{Q_{1,3}}(X), \EE_{Q_{2,3}}(X)\} = 12,\\
\ups(Y) &=& \min\{\EE_{Q_{1,3}}(Y), \EE_{Q_{2,3}}(Y)\} =
\frac{110}{7} \approx 15.7,\\
\ops(Y) &=& \max\{\EE_{Q_{1,3}}(Y), \EE_{Q_{2,3}}(Y)\} = 18,
\end{eqnarray*}
so, we can consider the price assessment $\pi(S_1) = 20$, $\pi(X) = 11$, $\pi(Y) = 17$. It holds that the partial price assessments $\{\pi(S_1),\pi(X)\}$ and $\{\pi(S_1),\pi(Y)\}$ are arbitrage-free, while the global price assessment $\{\pi(S_1),\pi(X),\pi(Y)\}$ is not, as there is no $Q \in \Q$ such that
$\pi(S_1) = \EE_Q(S_1)$,
$\pi(X) = \EE_Q(X)$,
$\pi(Y) = \EE_Q(Y)$.
\hfill$\blacklozenge$
\end{example}

One of the main hypotheses underlying the one-period $n$-nomial market model is the absence of frictions that, together with the no-arbitrage principle, imply the linearity of the price functional. Nevertheless, as largely acknowledged in the literature (see, e.g., \cite{amihud1,amihud2}) real markets show frictions, mainly in the form of bid-ask spreads, that translate in the non-linearity of the price functional.

Since we have a set of equivalent martingale measures $\Q$, we could look for a suitable closed subset $\Q' \subseteq \Q$ to define a {\it lower pricing rule} as a discounted lower expectation, in a way to allow frictions in the market. In the literature, several papers investigated the problem of pricing using lower/upper expectation functionals (see, e.g., \cite{bensaid,elkaroui,jouni}).
The choice of $\Q'$ is not free of issues since a reasonable criterion should be provided. The most natural way to get $\Q'$ is to consider a finite $\G \subset \RR^\Omega$, and a {\it lower price assessment} $\upi:\G \to \RR$.
Here, the problem is to look for a closed $\Q' \subseteq \Q$ such that
$$
\upi(X) = \min_{Q \in \Q'} (1+r)^{-1}\EE_Q(X), \mbox{ for every $X \in \G$}.
$$
A first (trivial) constraint for $\upi$ is, for every $X \in \G$,
$$
\ups(X) < \upi(X)  < \ops(X), \mbox{ if $\ups(X) < \ops(X)$},
$$
and $\upi(X) = \ups(X) = \ops(X)$ otherwise,
which, however, does not assure the existence of such a $\Q'$, as shown in the following example.

\begin{example}
Let $\Omega$, $m_1$, $m_2$, $m_3$, $1+r$, $S_0$, $X$ and $Y$ as in Example~\ref{ex:assessment}. Consider the lower price assessment $\upi(S_1) = 20$, $\upi(X) = 11$ and $\upi(Y) = 17$. We have that there is no closed subset $\Q' \subseteq \Q$ such that the corresponding discounted lower expectation functional agrees with $\upi$, in fact
%. This immediately follows since
the following system
$$
\left\{
\begin{array}{ll}
q_1 + q_2 + q_3 = 1,\\[1ex]
4q_1 + 2q_2 + \frac{q_3}{4} = 1,\\[1ex]
20 q_1 + 10 q_2 + 10 q_3 = 11,\\[1ex]
10 q_1 + 10 q_2 + 20 q_3 \ge 17,\\[1ex]
q_k \ge 0, & k=1,2,3,
\end{array}
\right.
$$
is not compatible. Notice that the constraint related to $\upi(S_1) = 20$ is not reported since it is implied by the second equation.

We stress that, more generally, for the above assessment there is no closed subset $\mathcal{Q}'' \subseteq \MM(\Omega,\F)$ whose corresponding discounted lower expectation functional agrees with $\upi$. To see this, it is sufficient to consider the above system and relax  the second constraint in a greater than or equal to  constraint, as this result in an incompatible system.
\hfill$\blacklozenge$
\end{example}

Instead of looking for a closed $\Q' \subseteq \Q$, we could try to derive a lower pricing rule from the lower envelope $\underline{Q}$, which has been proved to be a belief function. The most natural way to get a lower pricing rule is to consider a discounted Choquet expectation derived from the ``risk-neutral'' belief function $\underline{Q}$. We stress that, working directly in the framework of belief functions allows to incorporate ``naturally'' frictions in the market, nevertheless, for such a lower pricing rule to be acceptable the classical notion of arbitrage must be generalized. This will be the objective of the next section.

\section{A generalized no-arbitrage principle}
\label{sec:gnoarb}
In this section we consider a finite measurable space $(\Omega,\F)$, with $\Omega = \{1,\ldots,n\}$ and $\F = \mathcal{P}(\Omega)$, endowed with a belief function $Bel$ encoding the market beliefs.
%under partially resolving uncertainty, as will be explained below.
Throughout this section we assume $Bel(A)> 0$, for every $A \in \F \setminus \{\emptyset\}$. Such a belief function $Bel$ plays the same role of the ``real-world'' probability measure $P$ in the classical formulation of a one-period market model (see, e.g., \cite{delbschac}). For this, $Bel$ can be dubbed as ``real-world'' belief function.

\begin{definition}
Given two belief functions $Bel, \widehat{Bel}$ on $\F$, we say that $\widehat{Bel}$ is {\it equivalent} to $Bel$, in symbol $\widehat{Bel} \sim Bel$, if $Bel(A) = 0 \Longleftrightarrow \widehat{Bel}(A) = 0$, for every $A \in \F$.
\end{definition}
 Let us stress that, since $Bel$ is positive on $\F \setminus \{\emptyset\}$, $\widehat{Bel} \sim Bel$ if and only if its M\"obius inverse $\widehat{\mu}$ is positive over the singletons.

 Also in this case, we refer to the filtration $\{\F_0,\F_1\}$ with $\F_0 = \{\emptyset,\Omega\}$ and $\F_1 = \F$.

We still consider a one-period market model related to times $t = 0$ and $t = 1$ where there is a risk-free bond assuring the return $1+r > 0$. Such a bond has price $B_0 = 1$ at time $t = 0$ and payoff $B_1 = 1+r$ at time $t =1$. Here, the goal is to allow frictions in the market by considering, for a random variable $X \in \RR^\Omega$, a lower price $\underline{\pi}(X)$ at time $t = 0$ and, if available, a corresponding upper price $\overline{\pi}(X)$, with $\underline{\pi}(X) \le \overline{\pi}(X)$, to be interpreted as bid-ask prices. In the case of the risk-free bond we assume absence of frictions, meaning that the lower price coincides with the upper price $\underline{\pi}(B_1) = \overline{\pi}(B_1) = B_0$, thus we simply call it price.

We consider a finite non-empty collection of random variables
\begin{equation}
\G = \{S^1_1,\ldots,S^m_1\} \subset \RR^\Omega,
\end{equation}
expressing random payoffs at time $t = 1$ and a lower price assessment $\upi:\G \to \RR$ related to time $t = 0$. In analogy with the classical formulation of no-arbitrage pricing \cite{delbschac}, we do not require the risk-free bond to be part of $\G$ as it possesses a special role being used as num\'{e}raire.

Our aim is to determine a necessary and sufficient condition for the existence of a belief function $\widehat{Bel} \sim Bel$ such that, for $k = 1,\ldots,m$, it holds that
$$
(1+r)^{-1}\CC_{\widehat{Bel}}(S^k_1) = \upi(S^k_1).
$$
By the positive homogeneity property of the Choquet integral (see, e.g., \cite{grabisch}),
$$
(1+r)^{-1}\CC_{\widehat{Bel}}(S^k_1) =\CC_{\widehat{Bel}}( (1+r)^{-1}S^k_1),
$$
thus we can consider the discounted payoff $\tilde{S}^k_1 = (1+r)^{-1}S^k_1$, for $k = 1,\ldots,m$, and write
\begin{equation}
\CC_{\widehat{Bel}}(\tilde{S}^k_1) = \upi(S^k_1).
\end{equation}

Here we assume to know only the lower price of every $S_1^k$, for $k = 1,\ldots,m$. This is not restrictive since, if also the upper price assessment $\overline{\pi}:\G \to \RR$ is available, then the problem can be reformulated by considering
\begin{equation}
\G' = \{S_1^1,\ldots,S_1^m,-S_1^1,\ldots,-S_1^m\}
\end{equation}
together with $\underline{\pi}':\G' \to \RR$ such that,
for $k = 1,\ldots,m$,
\begin{equation}
\underline{\pi}'(S_1^k) = \underline{\pi}(S_1^k)
\quad \mbox{and} \quad
\underline{\pi}'(-S_1^k) = -\overline{\pi}(S_1^k).
\end{equation}

As usual, a {\it portfolio} is a vector $\blambda = (\lambda_1,\ldots,\lambda_m)^T  \in \RR^m$, whose components express the number of units bought/sold of every contract related to the random payoffs in $\G$.

Here, we assume the {\it partially resolving uncertainty} principle proposed by  Jaffray \cite{Jaffray-Bel} according to which the agent may only acquire the information that an event $B\neq \emptyset$ occurs, without knowing which is the true state of the world $\omega \in B$. Further, we assume that the agent is systematically pessimistic in his/her quantitative evaluations. As such, both in computing his/her (discounted) payoff related to a portfolio of securities and in the corresponding gain, the agent considers all non-impossible events in $\U = \F \setminus\{\emptyset\}$ further, for every $X \in \RR^\Omega$, he/she considers the corresponding $X^\l \in \RR^\U$ built taking minima of $X$ as in \eqref{eq:lower-gen}. This is in contrast with the principle of {\it completely resolving uncertainty} which is usually tacitly adopted and amounts in assuming that the agent will always acquire the information on the true state of the world $\omega \in \Omega$.

Working under partially resolving uncertainty, the final (discounted) payoff of the portfolio is the function $Z_\blambda: \U \to \RR$ defined, for every $B \in \U$, as
\begin{equation}
Z_\blambda(B) = \sum_{k = 1}^m \lambda_k(\tilde{S}^k_1)^\l(B),
\end{equation}
while we interpret the quantity $\pi_\blambda= \sum_{k = 1}^m \lambda_k\upi(S^k_1)$ as the hypothetical price at time $t = 0$ of the portfolio that we would have if we were in a situation of completely resolving uncertainty. Hence, we can define the function $G_\blambda:\U \to \RR$ setting, for every $B \in \U$,
\begin{equation}
G_\blambda(B) = Z_\blambda(B) - \pi_\blambda = \sum_{k = 1}^m \lambda_k\left((\tilde{S}^k_1)^\l(B) - \upi(S^k_1)\right),
\end{equation}
that can be interpreted as a random gain under partially resolving uncertainty.

\begin{theorem}
\label{th:nodutch}
The following conditions are equivalent:
\begin{itemize}
\item[\it (i)] there exists a belief function $\widehat{Bel}$ such that $\CC_{\widehat{Bel}}(\tilde{S}^k_1) = \upi(S^k_1)$, for $k = 1,\ldots,m$;
\item[\it (ii)] for every $\mbox{\boldmath $\lambda$} = (\lambda_1,\ldots,\lambda_m)^T\in \RR^m$ it holds that
$$
\min_{B \in \U} G_\blambda(B) \le 0 \le \max_{B \in \U} G_\blambda(B).
$$
\end{itemize}
\end{theorem}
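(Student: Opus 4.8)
The plan is to translate both conditions into a single statement of convex geometry about a finite family of vectors in $\RR^m$ and then close the gap with a theorem of the alternative.

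First I would use the representation of the Choquet integral through the M\"obius inverse. By the formula $\CC_{\widehat{Bel}}(X) = \sum_{B \in \U} X^\l(B)\widehat{\mu}(B)$ and the characterization recalled in Section~\ref{sec:prelim}, a belief function $\widehat{Bel}$ is in bijection with its M\"obius inverse $\widehat{\mu}$, which is precisely a non-negative function on $\F$ with $\widehat{\mu}(\emptyset)=0$ and $\sum_{B\in\U}\widehat{\mu}(B)=1$. Hence condition {\it (i)} is equivalent to the existence of such a $\widehat{\mu}$ satisfying $\sum_{B\in\U}(\tilde{S}^k_1)^\l(B)\widehat{\mu}(B)=\upi(S^k_1)$ for $k=1,\ldots,m$. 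Introducing for each $B\in\U$ the vector $v_B=\big((\tilde{S}^1_1)^\l(B)-\upi(S^1_1),\ldots,(\tilde{S}^m_1)^\l(B)-\upi(S^m_1)\big)^T\in\RR^m$ and using $\sum_{B\in\U}\widehat{\mu}(B)=1$, this system rewrites as $\sum_{B\in\U}\widehat{\mu}(B)v_B=\mathbf{0}$. Since $\widehat{\mu}$ ranges exactly over all probability distributions on $\U$, condition {\it (i)} is equivalent to $\mathbf{0}\in\operatorname{conv}\{v_B:B\in\U\}$.

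Next I would recast condition {\it (ii)} in the same language. Directly from the definition, $G_\blambda(B)=\sum_{k=1}^m\lambda_k\big((\tilde{S}^k_1)^\l(B)-\upi(S^k_1)\big)=\langle\blambda,v_B\rangle$, so {\it (ii)} asserts that for every $\blambda\in\RR^m$ one has $\min_{B\in\U}\langle\blambda,v_B\rangle\le 0\le\max_{B\in\U}\langle\blambda,v_B\rangle$. Replacing $\blambda$ by $-\blambda$ interchanges the two inequalities, so {\it (ii)} is equivalent to the single requirement that there is no $\blambda\in\RR^m$ with $\langle\blambda,v_B\rangle>0$ for every $B\in\U$. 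It then remains to match the two reformulations, which is a theorem of the alternative (Gordan's theorem): for the finite family $\{v_B:B\in\U\}$, exactly one of ``$\mathbf{0}\in\operatorname{conv}\{v_B\}$'' and ``there exists $\blambda$ with $\langle\blambda,v_B\rangle>0$ for all $B$'' holds. Concretely, for {\it (i)}$\Rightarrow${\it (ii)} I would note that if $\widehat{\mu}$ solves {\it (i)} then $\sum_{B}\widehat{\mu}(B)G_\blambda(B)=\langle\blambda,\sum_B\widehat{\mu}(B)v_B\rangle=\langle\blambda,\mathbf{0}\rangle=0$ for every $\blambda$, and a convex combination of the numbers $G_\blambda(B)$ equal to $0$ forces $\min_B G_\blambda(B)\le 0\le\max_B G_\blambda(B)$; for {\it (ii)}$\Rightarrow${\it (i)} I would argue by contraposition, using that $\operatorname{conv}\{v_B\}$ is compact (as $\U$ is finite) and that, when it misses the origin, the strict separating hyperplane theorem produces $\blambda$ with $\langle\blambda,v_B\rangle>0$ for all $B$, so that $\min_B G_\blambda(B)>0$ and {\it (ii)} fails.

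The conceptual crux, and the only genuinely non-routine step, is the first reformulation: recognizing that under partially resolving uncertainty the Choquet-pricing constraints become a finite moment problem in which the M\"obius inverse plays the role of a probability mass on $\U$, so that the statement collapses to the classical separation/Dutch-book dichotomy. After that identification the remaining work is the standard finite-dimensional separation argument, for which only the compactness afforded by the finiteness of $\U$ and the strict version of the separating hyperplane theorem are required. I do not expect the positivity of $\widehat{Bel}$ to enter here: the statement deliberately asks only for a belief function and not for a strictly positive one, consistent with this being the \emph{avoiding Dutch book} analogue rather than the full no-arbitrage characterization.
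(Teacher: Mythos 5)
Your proof is correct and follows essentially the same route as the paper: both reduce condition \emph{(i)} to a linear feasibility problem in the M\"obius masses (via $\CC_{\widehat{Bel}}(X)=\sum_{B\in\U}X^\l(B)\widehat{\mu}(B)$ and the identification of belief functions with non-negative normalized mass functions on $\U$) and then invoke a finite-dimensional theorem of the alternative. The only difference is packaging: the paper keeps the normalization constraint as an explicit row and applies Farkas' lemma, eliminating the extra dual variable by subtracting ${\bf b}^T{\bf y}>0$, whereas you absorb normalization into the convex-hull formulation $\mathbf{0}\in\operatorname{conv}\{v_B\}$ and close with Gordan's theorem/strict separation --- an interchangeable form of the same duality argument.
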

\begin{proof}
The proof can be obtained applying Theorem~4.1 in \cite{cpv-bumi}, working with the dual capacity of $\widehat{Bel}$, which is a plausibility function. Here we provide a direct proof for the sake of completeness.

Fix an enumeration of $\U = \{B_1,\ldots,B_{2^n - 1}\}$. Condition {\it (i)} is equivalent to the solvability of the following system
$$
\left\{
\begin{array}{ll}
{\bf A}{\bf x} = {\bf b},\\
{\bf x} \ge {\bf 0},
\end{array}
\right.
$$
where ${\bf x} = (\widehat{\mu}(B_1),\ldots,\widehat{\mu}(B_{2^n-1}))^T \in \RR^{(2^n - 1)}$ is an unknown column vector, ${\bf A} \in \RR^{(m+1) \times (2^n - 1)}$ is the coefficient matrix with
$$
{\bf A} =
\left(
\begin{array}{ccc}
(\tilde{S}^1_1)^\l(B_1) & \cdots & (\tilde{S}^1_1)^\l(B_{2^n-1})\\
\vdots & & \vdots\\
(\tilde{S}^m_1)^\l(B_1) & \cdots & (\tilde{S}^m_1)^\l(B_{2^n-1})\\
\1_\Omega^\l(B_1) & \cdots & \1_\Omega^\l(B_{2^n-1})
\end{array}
\right),
$$
and ${\bf b} = (\upi(S^1_1),\ldots,\upi(S^m_1),1)^T \in \RR^{(m+1)}$.

By Farkas' lemma \cite{mangasarian}, the system above is compatible if and only if the following system is not compatible
$$
\left\{
\begin{array}{ll}
{\bf A}^T{\bf y} \le {\bf 0},\\
{\bf b}^T{\bf y} > 0,
\end{array}
\right.
$$
where ${\bf y} = (\lambda_1,\ldots,\lambda_m,\lambda_{m+1})^T \in \RR^{(m+1)}$ is an unknown column vector. It holds that ${\bf A}^T{\bf y} \in \RR^{(2^n - 1)}$ and, for $i=1,\ldots,2^n - 1$, the $i$th component of constraint ${\bf A}^T{\bf y} \le {\bf 0}$ is
$$
\sum_{k=1}^m \lambda_k (\tilde{S}^k_1)^\l (B_i) + \lambda_{m+1} \le 0,
$$
moreover, subtracting the positive quantity ${\bf b}^T{\bf y}$ we get
$$
\sum_{k=1}^m \lambda_k\left((\tilde{S}^k_1)^\l(B_i) - \upi(S_1^k)\right) < 0.
$$
Thus, condition {\it (i)} is equivalent to the existence of $i \in \{1,\ldots,2^n - 1\}$ such that the above inequality does not hold, which, in turn, is equivalent to {\it (ii)}.
\hfill$\square$
\end{proof}

The above theorem says that, working under partially resolving uncertainty, in order to have a discounted totally monotone Choquet expectation representation of the lower price assessment $\upi$, it is necessary and sufficient that every portfolio $\mbox{\boldmath $\lambda$}$ does not give rise to a sure loss or a sure gain over $\U$. In other terms, the above condition can be considered a {\it generalized avoiding Dutch book condition}, working under partially resolving uncertainty.
Nevertheless, the condition {\it (ii)} of Theorem~\ref{th:nodutch} does not assure that $\widehat{Bel} \sim Bel$, that is we do not have any guarantee that $\widehat{Bel}(A) > 0$, for every $A \in \F \setminus \{\emptyset\}$.

The following theorem provides a necessary and sufficient condition for the existence of an equivalent belief function positive on the entire $\F \setminus \{\emptyset\}$ that can be dubbed ``risk-neutral'' belief function.

Such theorem is the analog of the {\it first fundamental theorem of asset pricing}, formulated in the Dempster-Shafer theory of evidence.

\begin{theorem}
\label{th:noarb}
The following conditions are equivalent:
\begin{itemize}
\item[\it (i)] there exists a belief function $\widehat{Bel} \sim Bel$, i.e., $\widehat{Bel}(A) > 0$, for every $A \in \F \setminus \{\emptyset\}$, such that $\CC_{\widehat{Bel}}(\tilde{S}^k_1) = \upi(S^k_1)$, for $k = 1,\ldots,m$;
\item[\it (ii)] for every $\mbox{\boldmath $\lambda$} = (\lambda_1,\ldots,\lambda_m)^T  \in \RR^{m}$ none of the following conditions holds:
\begin{itemize}
\item[\it (a)] $Z_\blambda(\{i\}) = 0$, for $i = 1,\ldots,n$,  $Z_\blambda(B) \ge 0$, for all $B \in \U \setminus \{\{i\} \,:\, i \in \Omega\}$ and $\pi_\blambda < 0$;
\item[\it (b)] $Z_\blambda(\{i\}) \ge 0$, for $i = 1,\ldots,n$, with at least a strict inequality, $Z_\blambda(B) \ge 0$, for all $B \in \U \setminus \{\{i\} \,:\, i \in \Omega\}$, and $\pi_\blambda \le 0$.
\end{itemize}
\end{itemize}
\end{theorem}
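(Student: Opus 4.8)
The plan is to establish the contrapositive equivalence: condition \textit{(i)} fails if and only if some portfolio $\blambda$ satisfies \textit{(a)} or \textit{(b)}. First I would recast \textit{(i)} as a linear feasibility problem in the M\"obius inverse. A belief function $\widehat{Bel}$ corresponds to a M\"obius inverse $\widehat{\mu}$ with $\widehat{\mu}(\emptyset)=0$, $\widehat{\mu}(B)\ge 0$ and $\sum_{B}\widehat{\mu}(B)=1$, and, as noted after the definition of equivalence, $\widehat{Bel}\sim Bel$ amounts to $\widehat{\mu}(\{i\})>0$ for every $i$. Using $\CC_{\widehat{Bel}}(\tilde{S}^k_1)=\sum_{B\in\U}(\tilde{S}^k_1)^\l(B)\widehat{\mu}(B)$, condition \textit{(i)} becomes the solvability, in the unknowns $x_B=\widehat{\mu}(B)$ ($B\in\U$), of the system
$$
\sum_{B\in\U}(\tilde{S}^k_1)^\l(B)\,x_B=\upi(S^k_1)\ (k=1,\dots,m),\quad \sum_{B\in\U}x_B=1,\quad x_B\ge 0\ (B\in\U),\quad x_{\{i\}}>0\ (i=1,\dots,n).
$$

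One implication is elementary. If such a $\widehat{Bel}$ exists, combining the $m$ price equations yields, for every $\blambda$, the identity $\pi_\blambda=\sum_{k}\lambda_k\upi(S^k_1)=\sum_{B\in\U}\widehat{\mu}(B)\,Z_\blambda(B)$. Were \textit{(b)} to hold for some $\blambda$, the right-hand side would be at least $\widehat{\mu}(\{i_0\})Z_\blambda(\{i_0\})>0$ (here $\widehat{\mu}(\{i_0\})>0$ because $\widehat{Bel}\sim Bel$), contradicting $\pi_\blambda\le 0$; were \textit{(a)} to hold, it would be $\ge 0$, contradicting $\pi_\blambda<0$. Thus \textit{(i)} implies \textit{(ii)}.

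For the converse I would invoke a theorem of the alternative tolerating the strict constraints $x_{\{i\}}>0$, namely Motzkin's transposition theorem \cite{mangasarian} (the strict refinement of the Farkas lemma used in Theorem~\ref{th:nodutch}). Homogenizing the two equality blocks with an auxiliary scalar $t>0$ and placing $\{x_{\{i\}}>0\}\cup\{t>0\}$ in the strict block, Motzkin's theorem states that the above system is infeasible precisely when there exist a portfolio $\blambda\in\RR^m$ and a scalar $d$ (built from the multiplier of the normalization row) with $Z_\blambda(B)\ge d$ for all $B\in\U$, $\pi_\blambda\le d$, and either $\pi_\blambda<d$ or $Z_\blambda(\{i_0\})>d$ for some singleton $\{i_0\}$ (after the sign change $\blambda\mapsto-\blambda$, $d\mapsto-d$). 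Here $d$ plays the role of a position in the risk-free bond, whose discounted payoff is the constant $\1_\Omega$ and whose price is carried by the normalization row; since the bond is freely available as num\'{e}raire, one may normalize to $d=0$, which shifts $Z_\blambda$ on every $B$ and $\pi_\blambda$ by the same constant while leaving the gain $G_\blambda$ unchanged. This gives $Z_\blambda(B)\ge 0$ for all $B$ and $\pi_\blambda\le 0$, and the residual strict inequality splits into exactly the two arbitrage types: if it sits on a singleton payoff one obtains \textit{(b)}, whereas if all singleton payoffs vanish it must sit on the price, forcing $\pi_\blambda<0$ and hence \textit{(a)}.

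The main obstacle is this final bookkeeping: correctly tracking the normalization/bond multiplier and verifying that the single nonvanishing-multiplier condition furnished by Motzkin's theorem distributes into precisely cases \textit{(a)} and \textit{(b)} — case \textit{(a)} when the active multiplier is the bond one (forcing $\pi_\blambda<0$) and case \textit{(b)} when it is a singleton one (forcing a strict singleton payoff). The separation of these two cases, together with the role of the num\'{e}raire in normalizing $d$, is where the argument requires care; everything else is routine linear algebra.
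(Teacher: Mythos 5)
Your setup and your easy direction are correct and follow the same route as the paper: condition \textit{(i)} is recast as nonnegative solvability of a linear system in the M\"obius inverse with strict positivity on singletons, and a Motzkin-type theorem of the alternative is applied. Your direct argument for \textit{(i)}$\Rightarrow$\textit{(ii)} via the identity $\pi_\blambda=\sum_{B\in\U}\widehat{\mu}(B)Z_\blambda(B)$ is clean and valid. Moreover, the dual object your application of Motzkin produces --- a portfolio $\blambda$ together with a free scalar $d$ such that $Z_\blambda(B)\ge d$ for all $B\in\U$, $\pi_\blambda\le d$, with a strict inequality either on the price or on some singleton --- is exactly the paper's pair of conditions \textit{(a')}, \textit{(b')}, with $d$ playing the role of $-\lambda_{m+1}$, the multiplier of the normalization row.

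The genuine gap is the step you yourself flag as the main obstacle: the ``normalization to $d=0$''. Conditions \textit{(a)} and \textit{(b)} quantify over portfolios $\blambda\in\RR^m$ of the $m$ risky payoffs only. Absorbing $d$ amounts to adding a position of $-d$ in the bond, and the resulting portfolio lies outside the class over which \textit{(ii)} quantifies; ``shifting $Z_\blambda$ and $\pi_\blambda$ by the same constant'' is therefore not an available operation, and the fact that the gain $G_\blambda$ is unchanged does not help, because \textit{(a)} and \textit{(b)} are stated in terms of $Z_\blambda$ and $\pi_\blambda$ separately, not in terms of $G_\blambda$. The reduction is in fact false, not merely unjustified. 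Take $n=2$, $m=1$, $r=0$, $S_1^1(1)=2$, $S_1^1(2)=1$, $\upi(S_1^1)=3$. Condition \textit{(i)} fails, since for every belief function $\CC_{\widehat{Bel}}(S_1^1)=2\widehat{\mu}(\{1\})+\widehat{\mu}(\{2\})+\widehat{\mu}(\Omega)\le 2<3$. Motzkin duly returns a certificate with $d\neq 0$: for $\lambda_1=-1$, $d=-3$ (sell the asset and bank the proceeds) one has $\left(Z_\blambda(\{1\}),Z_\blambda(\{2\}),Z_\blambda(\Omega)\right)=(-2,-1,-1)\ge -3$, $\pi_\blambda=-3\le d$, with strict inequalities on both singletons. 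Yet no $\lambda_1\in\RR$ satisfies \textit{(a)} or \textit{(b)}: \textit{(a)} forces $2\lambda_1=\lambda_1=0$ and then $\pi_\blambda=0$ is not negative, while \textit{(b)} forces $\lambda_1>0$ and $3\lambda_1\le 0$ simultaneously. So a certificate with $d\neq 0$ cannot in general be converted into one with $d=0$, and no bookkeeping of multipliers will repair this.

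For fairness, you are in good company: the paper's own proof makes the very same reduction, from its conditions \textit{(a')}, \textit{(b')} (which involve $\lambda_{m+1}$) to \textit{(a)}, \textit{(b)}, with a one-line justification (``taking $\lambda_{m+1}=0$ \ldots proceeding by contradiction''), and the example above breaks that step as well. What the theorem of the alternative actually delivers is the equivalence of \textit{(i)} with the absence of certificates $(\blambda,\lambda_{m+1})\in\RR^{m+1}$ satisfying \textit{(a')} or \textit{(b')}, i.e., no-arbitrage over portfolios that are allowed a bond position; your proof, like the paper's, is only valid up to that point.
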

\begin{proof}
Since every belief function is completely characterized by its M\"obius inverse, statement {\it (i)} is equivalent to the existence of a non-negative function $\widehat{\mu} : \F \to \RR$ such that
$$
\widehat{\mu}(\emptyset) = 0, \quad \sum_{A \in \F} \widehat{\mu}(A) = 1, \quad \mbox{and} \quad
\widehat{Bel}(A) = \sum_{B \subseteq A} \widehat{\mu}(B),  \quad \mbox{for every $A \in \F$},
$$
further satisfying $\widehat{\mu}(\{i\}) > 0$, for all $i \in \Omega$, and
$$
\CC_{\widehat{Bel}}(\tilde{S}^k_1) = \sum_{B \in \U} (\tilde{S}^k_1)^\l(B) \widehat{\mu}(B) = \upi(S^k_1), \quad \mbox{for $k = 1,\ldots,m$}.
$$

Fix an enumeration of $\U = \{B_1,\ldots,B_{2^n -1}\}$ such that $B_i = \{i\}$, for $i=1,\ldots,n$, and consider the matrices ${\bf A} \in \RR^{(2(m+1) + 2^n-(n+1)) \times (2^n-1)}$ and ${\bf B}  \in \RR^{n \times (2^n - 1)}$ defined as
$$
{\bf A} = \left(
\begin{array}{c}
{\bf C}\\
\hline
\left.{\bf O}_1\right| -{\bf I}_{(2^n - (n+1))}
\end{array}
\right)
\quad \mbox{and} \quad
{\bf B} = \left(-{\bf I}_n \left| {\bf O}_2\right)\right.,
$$
where ${\bf C} \in \RR^{2(m+1) \times (2^n-1)}$ is defined as
$$
{\bf C} =
\left(
\begin{array}{ccc}
(\tilde{S}_1^1)^\l(B_1) & \cdots & (\tilde{S}_1^1)^\l(B_{2^n-1})\\
-(\tilde{S}_1^1)^\l(B_1) & \cdots & -(\tilde{S}_1^1)^\l(B_{2^n-1})\\
\vdots & & \vdots\\
(\tilde{S}_1^m)^\l(B_1) & \cdots & (\tilde{S}_1^m)^\l(B_{2^n-1})\\
-(\tilde{S}_1^m)^\l(B_1) & \cdots & -(\tilde{S}_1^m)^\l(B_{2^n-1})\\
\1_\Omega^\l(B_1) & \cdots & \1_\Omega^\l(B_{2^n-1})\\
-\1_\Omega^\l(B_1) & \cdots & -\1_\Omega^\l(B_{2^n-1})\\
\end{array}
\right),
$$
in which ${\bf I}_{(2^n - (n+1))} \in \RR^{(2^n - (n+1)) \times (2^n-(n+1))}$ and  ${\bf I}_n \in \RR^{n \times n}$ are identity matrices, and ${\bf O}_1 \in \RR^{(2^n -(n+1)) \times n}$ and ${\bf O}_2 \in \RR^{n \times (2^n - (n+1))}$ are null matrices.
Take the vector
$$
{\bf b} = (\upi(S_1^1),-\upi(S_1^1),\ldots,\upi(S_1^m),-\upi(S_1^m),1,-1,0,\ldots,0)^T
$$
with ${\bf b} \in \RR^{(2(m+1) + 2^n - (n+1))}$
and consider the unknown vector
$$
{\bf x} = (\widehat{\mu}(B_1),\ldots,\widehat{\mu}(B_{2^n - 1}))^T
$$
with ${\bf x} \in \RR^{(2^n -1)}$. Condition  {\it (i)} turns out to be equivalent to the solvability of the following system
$$
\left\{
\begin{array}{l}
{\bf A}{\bf x} \le {\bf b},\\
{\bf B}{\bf x} < {\bf 0}.
\end{array}
\right.
$$
By a well-known version of Motzkin's theorem of the alternative (see, e.g., Theorem 1 in \cite{ben-israel}) the above system is solvable if and only if for every ${\bf y} = (y_1,y_1',\ldots,y_m,y_m',y_{m+1},y_{m+1}',\alpha_{n+1},\ldots,\alpha_{2^n-1})^T \in \RR^{(2(m+1) + 2^n - (n+1))}$ and ${\bf z} = (z_1,\ldots,z_n)^T \in \RR^n$ with ${\bf y} \ge {\bf 0}$ and ${\bf z} \ge {\bf 0}$, none of the following conditions holds:
\begin{itemize}
\item ${\bf A}^T{\bf y} + {\bf B}^T {\bf z} = {\bf 0}$, ${\bf z} = {\bf 0}$ and ${\bf b}^T{\bf y} < 0$;
\item ${\bf A}^T{\bf y} + {\bf B}^T {\bf z} = {\bf 0}$, ${\bf z} \neq {\bf 0}$ and ${\bf b}^T{\bf y} \le 0$.
\end{itemize}
In turn, setting $\lambda_k = y_k - y_k'$, for $k = 1,\ldots,m+1$, and considering
$\tilde{\bf y} \in \RR^{((m + 1)+2^n-(n+1))}$,
$\tilde{\bf A} \in \RR^{((m + 1) + 2^n - (n+1)) \times (2^n-1)}$ and $\tilde{\bf b} \in\RR^{((m + 1)+2^n-(n+1))}$, with
$$
\tilde{\bf y} = (\lambda_1,\ldots,\lambda_m,\lambda_{m+1},\alpha_{n+1},\ldots,\alpha_{2^n-1})^T \quad \mbox{such that $\alpha_{n+1},\ldots,\alpha_{2^n-1} \ge 0$,}
$$
$$
\tilde{\bf A} = \left(
\begin{array}{c}
\tilde{\bf C}\\
\hline
\left.{\bf O}_1\right| -{\bf I}_{(2^n - (n+1))}
\end{array}
\right)
\quad \mbox{and} \quad
\tilde{\bf b} = (\upi(S_1^1),\ldots,\upi(S_1^m),1,0,\ldots,0)^T,
$$
where $\tilde{\bf C} \in \RR^{(m+1) \times (2^n - 1)}$ is defined as
$$
\tilde{\bf C} =
\left(
\begin{array}{ccc}
(\tilde{S}_1^1)^\l(B_1) & \cdots & (\tilde{S}_1^1)^\l(B_{2^n-1})\\
\vdots & & \vdots\\
(\tilde{S}_1^m)^\l(B_1) & \cdots & (\tilde{S}_1^m)^\l(B_{2^n-1})\\
\1_\Omega^\l(B_1) & \cdots & \1_\Omega^\l(B_{2^n-1})
\end{array}
\right),
$$
the above conditions can be rewritten as:
\begin{itemize}
\item $\tilde{\bf A}^T\tilde{\bf y} + {\bf B}^T {\bf z} = {\bf 0}$, ${\bf z} = {\bf 0}$ and $\tilde{\bf b}^T\tilde{\bf y} < 0$;
\item $\tilde{\bf A}^T\tilde{\bf y}+ {\bf B}^T {\bf z} = {\bf 0}$, ${\bf z} \neq {\bf 0}$ and $\tilde{\bf b}^T\tilde{\bf y} \le 0$.
\end{itemize}

Denoting $\blambda = (\lambda_1,\ldots,\lambda_m)^T \in \RR^m$, we have that
$$
(\tilde{\bf A}^T\tilde{\bf y} + {\bf B}^T {\bf z})_i
=
\left\{
\begin{array}{ll}
Z_\blambda(B_i) + \lambda_{m+1} - z_i, & \mbox{for $i=1,\ldots,n$},\\[1ex]
Z_\blambda(B_i) + \lambda_{m+1} - \alpha_i, & \mbox{for $i=n+1,\ldots,2^n-1$},
\end{array}
\right.
$$
and further $\tilde{\bf b}^T\tilde{\bf y} = \pi_\blambda + \lambda_{m+1}$.

Hence, for every $\blambda = (\lambda_1,\ldots,\lambda_m)^T \in \RR^m$ and $\lambda_{m+1} \in \RR$, the above conditions can be rewritten as
\begin{description}
\item[\it (a')] $Z_\blambda(\{i\}) + \lambda_{m+1} = 0$, for $i = 1,\ldots,n$,  $Z_\blambda(B) + \lambda_{m+1} \ge 0$, for all $B \in \U \setminus \{\{i\} \,:\, i \in \Omega\}$ and $\pi_\blambda + \lambda_{m+1} < 0$;
\item[\it (b')] $Z_\blambda(\{i\}) + \lambda_{m+1} \ge 0$, for $i = 1,\ldots,n$, with at least a strict inequality, $Z_\blambda(B) + \lambda_{m+1} \ge 0$, for all $B \in \U \setminus \{\{i\} \,:\, i \in \Omega\}$, and $\pi_\blambda + \lambda_{m+1} \le 0$.
\end{description}
Finally, for every $\blambda = (\lambda_1,\ldots,\lambda_m)^T \in \RR^m$ and $\lambda_{m+1} \in \RR$, none between {\it (a')} and {\it (b')} holds if and only if for every $\blambda = (\lambda_1,\ldots,\lambda_m)^T \in \RR^m$, none between {\it (a)} and {\it (b)} holds. Indeed, the first implication is obtained taking $\lambda_{m+1} = 0$, while the converse is obtained proceeding assuming that one between {\it (a)} and {\it (b)} holds and proceeding by contradiction.
\hfill$\square$
\end{proof}

Recall that we interpret $\pi_\blambda$ as the hypothetical price of the portfolio $\blambda$ as if we were in a situation of completely resolving uncertainty.
In this light, conditions {\it (ii.a)} and {\it (ii.b)} of previous theorem can be interpreted as two generalized forms of arbitrage, working under partially resolving uncertainty.
Avoiding condition {\it (ii.a)} assures that we cannot find a portfolio $\blambda$ whose hypothetical price $\pi_\blambda$ is negative (that is we are paid for it), resulting in a uniformly non-negative payoff $Z_\blambda$ in all the possible events in $\U$, with null value on the singletons (i.e., on those events where we have completely resolving uncertainty). Avoiding condition  {\it (ii.b)} assures that we cannot find a portfolio $\blambda$ whose hypothetical price $\pi_\blambda$ is negative or null (that is we are paid or we do not pay anything for it), resulting in a uniformly non-negative payoff $Z_\blambda$ in all the possible events in $\U$, with at least a strictly positive value on the singletons (i.e., on those events where we have completely resolving uncertainty).

It is immediate to see that the generalized no-arbitrage principle expressed by statement {\it (ii)} of Theorem~\ref{th:noarb} implies the generalized avoiding Dutch book condition in statement {\it (ii)} of Theorem~\ref{th:nodutch}.

Let us stress that the generalized no-arbitrage principle of Theorem~\ref{th:noarb} is actually weaker than the classical no-arbitrage principle. This is due to the fact that if a portfolio $\blambda$ gives rise to a generalized arbitrage of the form {\it (ii.a)} or {\it (ii.b)} then it also gives rise to a classical arbitrage, while a portfolio $\blambda$ giving rise to a classical arbitrage does not generally give rise to a generalized arbitrage.

Let us stress that the price functional determined by the discounted Choquet expectation with respect to a $\widehat{Bel}$ with M\"obius inverse $\widehat{\mu}$ as in Theorem~\ref{th:noarb} is generally not linear. In particular, we have that
\begin{eqnarray}
\sum_{B \in \U} Z_\blambda(B) \widehat{\mu}(B) &=&
\sum_{B \in \U} \left(\sum_{k=1}^m \lambda_k (\tilde{S}_1^k)^\l(B)\right) \widehat{\mu}(B)\\\nonumber
&=&
\sum_{k=1}^m \lambda_k \left(\sum_{B \in \U} (\tilde{S}_1^k)^\l(B) \widehat{\mu}(B) \right)\\\nonumber
&=& \sum_{k=1}^m \lambda_k \CC_{\widehat{Bel}}(\tilde{S}_1^k) = \sum_{k=1}^m \lambda_k \upi(S_1^k) = \pi_\blambda,
\end{eqnarray}
nevertheless, considering the random variable
$\sum_{k=1}^m\lambda_k \tilde{S}_1^k \in \RR^\Omega$, in general we have that
$$
\CC_{\widehat{Bel}}\left(\sum_{k=1}^m\lambda_k \tilde{S}_1^k\right)
\neq
\sum_{k=1}^m\CC_{\widehat{Bel}}(\lambda_k \tilde{S}_1^k)
\quad
\mbox{and}
\quad
\sum_{k=1}^m\CC_{\widehat{Bel}}(\lambda_k \tilde{S}_1^k)
\neq
\sum_{k=1}^m \lambda_k \CC_{\widehat{Bel}}(\tilde{S}_1^k).
$$
Clearly, in the above formulas we have equalities in case $\widehat{Bel}$ reduces to a probability measure. On the other hand, in the particular case $\tilde{S}_1^h,\tilde{S}_1^k$ are comonotonic and $\lambda_1,\lambda_2 \ge 0$, it holds that
$$
\CC_{\widehat{Bel}}\left(\lambda_1 \tilde{S}_1^h + \lambda_2 \tilde{S}_1^k\right) = \lambda_1 \CC_{\widehat{Bel}}(\tilde{S}_1^h) + \lambda_2 \CC_{\widehat{Bel}}(\tilde{S}_1^k) = \lambda_1 \upi(S_1^h) + \lambda_2 \upi(S_1^k).
$$
Furthermore, denoting by $\widehat{Pl}$ the dual plausibility function of $\widehat{Bel}$, we have that
for a generic random variable $X \in \RR^\Omega$, it holds that
\begin{equation}
(1+r)^{-1}\CC_{\widehat{Bel}}(X) \le (1+r)^{-1}\CC_{\widehat{Pl}}(X),
\end{equation}
i.e., the two values above should be interpreted as lower and upper prices.

The following example shows a lower price assessment violating the generalized no-arbitrage principle expressed in Theorem~\ref{th:noarb}.

\begin{example}
\label{ex:nodutch}
Let $\Omega = \{1,2,3,4\}$, $\F = {\cal P}(\Omega)$ and consider three contracts whose payoffs in euros at time $t = 1$ are
\begin{center}
\begin{tabular}{c|cccc}
$\Omega$ & $1$ & $2$ & $3$ & $4$\\
\hline
$S_1^1$ & $10$ & $10$ & $20$ & $20$\\
$S_1^2$ & $0$ & $10$ & $0$ & $10$\\
$S_1^3$ & $10$ & $30$ & $20$ & $40$
\end{tabular}
\end{center}
Assume that the lower prices at time $t = 0$ are fixed to $\upi(S_1^1) = 15$, $\upi(S_1^2) = 5$ and $\upi(S_1^3) = 20$ and that the risk-free interest rate is $r = 0$, so we have $\tilde{S}_1^k = S_1^k$, for $k = 1,2,3$.

This lower price assessment violates the generalized no-arbitrage principle of Theorem~\ref{th:noarb} as, in particular, it violates the generalized avoiding Dutch book condition expressed in Theorem~\ref{th:nodutch}. Indeed, every belief function $\widehat{Bel}$ on $\F$ induces a Choquet expectation functional on $\RR^\Omega$ which is positively homogeneous and superadditive, therefore, assuming $\CC_{\widehat{Bel}}(S_1^k) = \upi(S_1^k)$, for $k=1,2,3$, it should be, as $S_1^3 = S_1^1 + 2S_1^2$,
$$
\CC_{\widehat{Bel}}(S_1^3) =
\CC_{\widehat{Bel}}(S_1^1 + 2S_1^2) \ge
\CC_{\widehat{Bel}}(S_1^1) + 2\CC_{\widehat{Bel}}(S_1^2) = 25.
$$

Denoting $\U = \F \setminus \{\emptyset\}$ and omitting braces and commas to have a lighter set notation, if we consider the portfolio $\blambda = (-1,-2,1)^T$ we have that $\pi_\blambda = -5$ and
\begin{center}
\begin{tabular}{c|ccccccccccccccc}
$\U$ & $1$ & $2$ & $3$ & $4$ &
$12$ & $13$ & $14$ & $23$ & $24$ & $34$ &
$123$ & $124$ & $134$ & $234$ & $1234$
\\
\hline
$(S_1^1)^\l$ & $10$ & $10$ & $20$ & $20$ &
$10$ & $10$ & $10$ & $10$ & $10$ & $20$ &
$10$ & $10$ & $10$ & $10$ & $10$\\
$(S_1^2)^\l$ & $0$ & $10$ & $0$ & $10$ &
$0$ & $0$ & $0$ & $0$ & $10$ & $0$ &
$0$ & $0$ & $0$ & $0$ & $0$\\
$(S_1^3)^\l$ & $10$ & $30$ & $20$ & $40$ &
$10$ & $10$ & $10$ & $20$ & $30$ & $20$ &
$10$ & $10$ & $10$ & $20$ & $10$
\\
\hline
$Z_\blambda$ & $0$ & $0$ & $0$ & $0$ &
$0$ & $0$ & $0$ & $10$ & $0$ & $0$ &
$0$ & $0$ & $0$ & $10$ & $0$\\
$G_\blambda$ & $5$ & $5$ & $5$ & $5$ &
$5$ & $5$ & $5$ & $15$ & $5$ & $5$ &
$5$ & $5$ & $5$ & $15$ & $5$
\end{tabular}
\end{center}
Hence, since $\min\limits_{B \in \U} G_\blambda(B) > 0$, the generalized avoiding Dutch book condition is not satisfied, therefore there is no belief function $\widehat{Bel}$ such that $\CC_{\widehat{Bel}}$ agrees with the assessed lower prices. Moreover, the same $\blambda$ shows that we have a generalized arbitrage in the form of {\it (ii.a)} of Theorem~\ref{th:noarb}, since $Z_\blambda(\{i\}) = 0$, for $i = 1,\ldots,4$,  $Z_\blambda(B) \ge 0$, for all $B \in \U \setminus \{\{i\} \,:\, i \in \Omega\}$ and $\pi_\blambda < 0$.
\hfill$\blacklozenge$
\end{example}

The following example shows a lower price assessment violating the classical no-arbitrage principle but not the generalized no-arbitrage principle.

\begin{example}
Let $\Omega$, $\F$, $r$, and $S_1^1,S_1^2,S_1^3$ as in Example~\ref{ex:nodutch}. Consider the lower price assessment $\upi(S_1^1) = 15$, $\upi(S_1^2) = 5$ and $\upi(S_1^3) = 26$. Such an assessment violates the classical no-arbitrage principle, indeed, every probability measure $Q$ on $\F$ gives rise to a positive, linear and normalized functional $\EE_Q$ on $\RR^\Omega$. Hence, assuming $\EE_Q(S_1^k) = \upi(S_1^k)$, for $k=1,2,3$, it should be, as $S_1^3 = S_1^1 + 2S_1^2$,
$$
\EE_Q(S_1^3) = \EE_Q(S_1^1 + 2S_1^2) = \EE_Q(S_1^1) + 2\EE_Q(S_1^2) = 25.
$$

On the other hand, there exists a belief function $\widehat{Bel}$ on $\F$ which is strictly positive on $\F \setminus \{\emptyset\}$, whose corresponding Choquet expectation functional $\CC_{\widehat{Bel}}$ agrees with the given lower price assessment. For instance, we can take the $\widehat{Bel}$ whose M\"obius inverse $\widehat{\mu}$ is such that
\begin{center}
\begin{tabular}{c|ccccccccccccccc}
$\U$ & $1$ & $2$ & $3$ & $4$ &
$12$ & $13$ & $14$ & $23$ & $24$ & $34$ &
$123$ & $124$ & $134$ & $234$ & $1234$
\\
\hline
$(S_1^1)^\l$ & $10$ & $10$ & $20$ & $20$ &
$10$ & $10$ & $10$ & $10$ & $10$ & $20$ &
$10$ & $10$ & $10$ & $10$ & $10$\\
$(S_1^2)^\l$ & $0$ & $10$ & $0$ & $10$ &
$0$ & $0$ & $0$ & $0$ & $10$ & $0$ &
$0$ & $0$ & $0$ & $0$ & $0$\\
$(S_1^3)^\l$ & $10$ & $30$ & $20$ & $40$ &
$10$ & $10$ & $10$ & $20$ & $30$ & $20$ &
$10$ & $10$ & $10$ & $20$ & $10$
\\
\hline
$\widehat{\mu}$ & $\frac{2}{10}$ & $\frac{1}{10}$ & $\frac{1}{10}$ & $\frac{4}{10}$ &
$\frac{1}{10}$ & $0$ & $0$ & $\frac{1}{10}$ & $0$ & $0$ &
$0$ & $0$ & $0$ & $0$ & $0$
\end{tabular}
\end{center}

For such a $\widehat{Bel}$ we have that
\begin{eqnarray*}
\CC_{\widehat{Bel}}(S_1^1) &=& 10 \cdot \frac{2}{10} + (10+20+10+10) \cdot \frac{1}{10} + 20 \cdot \frac{4}{10} = 15,\\
\CC_{\widehat{Bel}}(S_1^2) &=& 10 \cdot \frac{1}{10} + 10 \cdot \frac{4}{10} = 5,\\
\CC_{\widehat{Bel}}(S_1^3) &=& 10 \cdot \frac{2}{10} + (30+20+10+20) \cdot \frac{1}{10} + 40 \cdot \frac{4}{10} = 26.
\end{eqnarray*}

Hence, by Theorem~\ref{th:noarb} we cannot find a portfolio $\blambda$ giving rise to a generalized arbitrage in the form of {\it (ii.a)} or {\it (ii.b)}. On the other hand, the portfolio $\blambda = (1,2,-1)^T$ gives rise to a classical arbitrage since $\pi_\blambda = -1 < 0$ and
\begin{center}
\begin{tabular}{c|cccc}
$\Omega$ & $1$ & $2$ & $3$ & $4$
\\
\hline
$S_1^1$ & $10$ & $10$ & $20$ & $20$\\
$S_1^2$ & $0$ & $10$ & $0$ & $10$\\
$S_1^3$ & $10$ & $30$ & $20$ & $40$
\\
\hline
$\sum_{k=1}^3\lambda_k S_1^k$ & $0$ & $0$ & $0$ & $0$
\end{tabular}
\end{center}
\hfill$\blacklozenge$
\end{example}

A non-linear pricing rule defined as a discounted Choquet expectation with respect to a concave capacity and satisfying a form of put-call parity has been axiomatically characterized, respectively, in \cite{ckl} and \cite{cmm}. Such a functional, that can be interpreted as an upper pricing rule, allows to model frictions in the market. Our generalized no-arbitrage condition is equivalent to the existence of a (possibly not unique) completely monotone discounted Choquet expectation that can be interpreted as a lower pricing rule still allowing for frictions in the market.

Let us stress that, if $\{\1_B \,:\, B \in \U\} \subseteq \G$ and $\upi:\G \to \RR$ satisfies the generalized no-arbitrage principle, then there exists a unique $\widehat{Bel}$, positive on $\F \setminus \{\emptyset\}$, such that the corresponding discounted Choquet expectation functional on $\RR^\Omega$ agrees with $\upi$. The payoffs in $\{\1_B \,:\, B \in \U\}$ can be considered as {\it generalized Arrow-Debreu securities} (see, e.g., \cite{dybvig,pliska,cerny}), working under partially resolving uncertainty.

\section{Equivalent inner approximating martingale belief functions}
\label{sec:inner}
We turn back to the lower envelope $\underline{Q}$ of the class $\Q$ of equivalent martingale measures induced by the $n$-nomial market model characterized in Section~\ref{sec:emm}. Recall that in this context we have only one risky asset whose price process is $\{S_0,S_1\}$. As already pointed out in Corollary~\ref{cor:lower}, $\underline{Q}$ is a belief function that we know is not positive over $\F\setminus\{\emptyset\}$, for $n > 2$. It is easily seen that every equivalent martingale measure $Q \in \Q$ is a belief function, positive on $\F \setminus \{\emptyset\}$, nevertheless, the choice of a particular $Q_0$ in the class $\Q$ is a problematic task, as one needs to provide a reasonable choice criterion. For instance, a possible choice is $Q_0 = \frac{1}{|\ext(\cl(\Q))|}\sum_{Q \in \ext(\cl(\Q))} Q$, which belongs to $\Q$ since it is a strict convex combination of elements of $\ext(\cl(\Q))$.

Once an equivalent martingale measure $Q_0$ has been chosen, some of the information contained in the class $\Q$ can be preserved if we consider (see, e.g., \cite{huber,walley-libro}) the {\it $\epsilon$-contamination} of $Q_0$ with respect to $\cl(\Q)$, where $\epsilon \in (0,1)$. This amounts to consider the closed subset of $\Q$ given by
\begin{equation}
\Q_\epsilon = \{Q \in \Q \,:\, Q = (1-\epsilon)Q_0 + \epsilon Q', Q' \in \cl(\Q)\},
\end{equation}
whose lower envelope $\underline{Q}_\epsilon = \min \Q_\epsilon$ is defined, for every $A \in \F$, as
\begin{equation}
\underline{Q}_\epsilon(A) = (1-\epsilon)Q_0(A) + \epsilon \underline{Q}(A).
\end{equation}
In particular, since $\underline{Q}_\epsilon$ is the strict convex combination of the two belief functions $Q_0$ and $\underline{Q}$, we have that $\underline{Q}_\epsilon$ is a belief function which, in turn, is strictly positive over $\F \setminus \{\emptyset\}$, as $Q_0$ is.

The idea is to directly use $\underline{Q}_\epsilon$ in order to derive a lower pricing rule through a discounted Choquet expectation. In the light of previous section, the obtained lower pricing rule would be acceptable if it satisfied the generalized no-arbitrage condition expressed by Theorem~\ref{th:noarb}.
The following example shows that, this is not the case for $n > 2$, and this is due to the fact that $\underline{Q}$ fails to satisfy the generalized avoiding Dutch book condition given in Theorem~\ref{th:nodutch}. Notice that in this context, the ``real-world'' probability $P$ can play the role of the ``real-world'' belief function introduced in Section~\ref{sec:gnoarb}.

Let us stress that, since $\min\limits_{Q \in \cl(\Q)} (1+r)^{-1}\EE_Q(S_1) = S_0$, we can refer to the lower price assessment $\upi(S_1) = S_0$.

\begin{example}
Consider $\Omega = \{1,2,3,4\}$, $m_1 = 4$, $m_2 = 2$, $m_3 = \frac{1}{2}$, $m_4 = \frac{1}{4}$, $1+r = 1$, $\Q$ and $\underline{Q}$ of Example~\ref{ex:lp}, and let $Q_0$ be an arbitrary element of $\Q$. Take $\epsilon \in (0,1)$ and consider the $\epsilon$-contamination class of $Q_0$ with respect to $\cl(\Q)$, whose lower envelope is $\underline{Q}_\epsilon = (1-\epsilon)Q_0 + \epsilon \underline{Q}$.

The lower pricing rule obtained as the discounted Choquet expectation with respect $\underline{Q}_\epsilon$ does not satisfy the generalized no-arbitrage principle expressed by Theorem~\ref{th:noarb}. Indeed, for it to be satisfied it should be
$$
(1+r)^{-1}\CC_{\underline{Q}_\epsilon}(S_1) = S_0,
$$
which is equivalent, since $S_0 = s > 0$, to
$$
\CC_{\underline{Q}_\epsilon}\left(\frac{S_1}{S_0}\right) = 1+r.
$$
Nevertheless, due to linearity of the Choquet integral with respect to the integrating capacity (see, e.g., \cite{denneberg,grabisch}), we have that
\begin{eqnarray*}
\CC_{\underline{Q}_\epsilon}\left(\frac{S_1}{S_0}\right) &=&
\CC_{(1-\epsilon)Q_0 + \epsilon \underline{Q}}\left(\frac{S_1}{S_0}\right)
= (1-\epsilon) \CC_{Q_0}\left(\frac{S_1}{S_0}\right) + \epsilon\CC_{\underline{\QQ}}\left(\frac{S_1}{S_0}\right)\\
&=& (1-\epsilon) + \epsilon \frac{54}{105} < 1+r,
\end{eqnarray*}
since $\CC_{\underline{\QQ}}\left(\frac{S_1}{S_0}\right) = \frac{54}{105}$.
\hfill$\blacklozenge$
\end{example}

A possible way to fulfill the generalized no-arbitrage principle is to look for an inner approximation $\widehat{Bel}$ of $\underline{Q}$ satisfying the generalized avoiding Dutch book condition and then define, for $\epsilon \in (0,1)$,
\begin{equation}
\widehat{Bel}_\epsilon = (1-\epsilon)Q_0 + \epsilon \widehat{Bel}.
\end{equation}
The belief functions $\widehat{Bel}$ and $\widehat{Bel}_\epsilon$ will be referred to as {\it inner approximating martingale belief function} and {\it equivalent inner approximating martingale belief function}, according to the following definition.

\begin{definition}
\label{def:martbel}
A belief function $\widehat{Bel}$ on $\F$ is said:
\begin{itemize}
\item an {\bf inner approximation} for $\underline{Q}$ if, for every $A \in \F$, it holds that
$$
\underline{Q}(A) \le \widehat{Bel}(A);
$$
\item a {\bf martingale belief function} if
$$
(1+r)^{-1}\CC_{\widehat{Bel}}(S_1) = S_0;
$$
\item an {\bf inner approximating martingale belief function} for $\underline{Q}$ if it is both an inner approximation for $\underline{Q}$ and a martingale belief function;
\item an {\bf equivalent inner approximating martingale belief function} for $\underline{Q}$ if it is an inner approximating martingale belief function for $\underline{Q}$ and $\widehat{Bel} \sim P$.
\end{itemize}
\end{definition}

Trivially, we have that every $Q \in \cl(\Q)$ is an inner approximating martingale belief function for $\underline{Q}$, while every $Q \in \Q$ is an equivalent inner approximating martingale belief function for $\underline{Q}$.

Avoiding triviality, our goal is to select a belief function $\widehat{Bel} \in \BB(\Omega,\F)$ which is an inner approximating martingale belief function for $\underline{Q}$ and is closest to $\underline{Q}$ with respect to a suitable distance $d$ defined on the set $\BB(\Omega,\F)$. Following \cite{mmv,mmv2}, two possible choices for the distance $d$ are
\begin{eqnarray}
d_1(Bel_1,Bel_2) &=&
\sum\limits_{A \in \F} |Bel_1(A) - Bel_2(A)|,\\
d_2(Bel_1,Bel_2) &=&
{\sum\limits_{A \in \F} (Bel_1(A) - Bel_2(A))^2}.
\end{eqnarray}
%\begin{itemize}
%\item $
%d_1(Bel_1,Bel_2) =
%\sum\limits_{A \in \F} |Bel_1(A) - Bel_2(A)|$,
%\item
%$d_2(Bel_1,Bel_2) =
%{\sum\limits_{A \in \F} (Bel_1(A) - Bel_2(A))^2}$.
%%\item
%%$d_3(Bel_1,Bel_2) =
%%\max\limits_{A \in \F} |Bel_1(A) - Bel_2(A)|$.
%\end{itemize}

Thus, for a fixed distance $d$, an optimal inner approximating martingale belief function $\widehat{Bel}$ for $\underline{Q}$ can be found by solving the following optimization problem:
\begin{equation}
\label{eq:op}
\begin{array}{c}
\minimize d(\widehat{Bel},\underline{Q})\\[1ex]
\mbox{subject to:}\\[1ex]
\left\{
\begin{array}{ll}
\widehat{Bel}(A) \ge \underline{Q}(A), & \mbox{for every $A \in \F$},\\[1ex]
(1+r)^{-1}\CC_{\widehat{Bel}}(S_1) = S_0,\\[1ex]
\widehat{Bel} \in \BB(\Omega,\F).
\end{array}
\right.
\end{array}
\end{equation}
Denote as before $\U = \F \setminus \{\emptyset\}$. The searched $\widehat{Bel}$ is completely characterized by its M\"obius inverse $\widehat{\mu}$ that must satisfy $\widehat{\mu}(\emptyset) = \widehat{Bel}(\emptyset) = \underline{Q}(\emptyset) = 0$. Moreover, since $S_0 = s > 0$, it holds that
$$
(1+r)^{-1}\CC_{\widehat{Bel}}(S_1) = S_0
$$
is equivalent to
$$
\CC_{\widehat{Bel}}\left(\frac{S_1}{S_0}\right) = 1+r,
$$
where
$$
\CC_{\widehat{Bel}}\left(\frac{S_1}{S_0}\right) =
\sum\limits_{B \in \U} \left(\frac{S_1}{S_0}\right)^\l(B)\widehat{\mu}(B)
=
\sum_{i=1}^n m_i \left(\sum_{\{i\} \subseteq B \subseteq \{1,\ldots,i\}} \widehat{\mu}(B)\right).
$$

Hence, the above problem \eqref{eq:op} is equivalent to the following optimization problem with linear constraints, whose unknowns are the values of $\widehat{\mu}$ on $\U$:
\begin{equation}
\label{eq:lp}
\begin{array}{c}
%\minimize \sum\limits_{A \in \U \setminus \{\Omega\}} 2^{|\Omega \setminus A| - 1}\widehat{\mu}(A)\\[3ex]
%\mbox{subject to:}\\[1ex]
\minimize d(\widehat{Bel},\underline{Q})\\[3ex]
\mbox{subject to:}\\[1ex]
\left\{
\begin{array}{ll}
\sum\limits_{\emptyset \neq B \subseteq A} \widehat{\mu}(B) \ge \underline{Q}(A), & \mbox{for every $A \in \U$},\\[1ex]
\sum\limits_{i=1}^n m_i \left(\sum\limits_{\{i\} \subseteq B \subseteq \{1,\ldots,i\}} \widehat{\mu}(B)\right) = 1+r,\\[3ex]
\sum\limits_{B \in \U} \widehat{\mu}(B) = 1,\\[2ex]
\widehat{\mu}(B) \ge 0, & \mbox{for every $B \in \U$}.
\end{array}
\right.
\end{array}
\end{equation}

It is easily seen that every $Q \in \cl(\Q)$ gives rise to a M\"obius inverse that satisfies all the constraints in problem \eqref{eq:lp}. Hence, the feasible region of problem \eqref{eq:lp} is a non-empty convex compact subset of $\RR^{(2^n -1)}$, endowed with the product topology.

We notice that, as already pointed out in \cite{mmv,mmv2}, if we consider the distance $d_1$ and take into account that $\widehat{Bel} \ge \underline{Q}$, we have that
\begin{eqnarray}
\label{eq:d1-simp}
d_1(\widehat{Bel},\underline{Q}) &=&
\sum\limits_{A \in \U}\left[\left(\sum\limits_{\emptyset \neq B \subseteq A} \widehat{\mu}(B)\right) - \underline{Q}(A)\right]\\\nonumber
&=&
\sum\limits_{A \in \U} 2^{|\Omega \setminus A|}\widehat{\mu}(A) - \sum\limits_{A \in \U}\underline{Q}(A),
\end{eqnarray}
where $\sum\limits_{A \in \U}\underline{Q}(A)$ is a constant, since $\underline{Q}$ is given. Therefore, problem \eqref{eq:lp} reduces to a linear programming problem.

The following example shows the computation of an equivalent inner approximating martingale belief function, relying on the distance $d_1$.

\begin{example}
Consider $\Omega = \{1,2,3,4\}$, $m_1 = 4$, $m_2 = 2$, $m_3 = \frac{1}{2}$, $m_4 = \frac{1}{4}$, $1+r = 1$, and $\Q$ and $\underline{Q}$ of Example~\ref{ex:lp}.

An inner approximating martingale belief function $\widehat{Bel}$ minimizing the $d_1$ distance is reported below

\begin{center}
\begin{tabular}{c|cccccccccccccccc}
$\F$ & $\emptyset$ & $1$ & $2$ & $3$ & $4$ & $12$ & $13$ & $14$ & $23$ & $24$ & $34$ & $123$ & $124$ & $134$ & $234$ & $\Omega$\\
\hline
\\[-1.5ex]
$\underline{\QQ}$
& $0$ & $0$ & $0$ & $0$ & $0$
& $\frac{15}{105}$ & $0$ & $0$ & $0$ & $0$ & $\frac{60}{105}$ &
$\frac{21}{105}$ & $\frac{15}{105}$ & $\frac{60}{105}$ & $\frac{84}{105}$ & $1$\\[1ex]
$\mu$
& $0$ & $0$ & $0$ & $0$ & $0$
& $\frac{15}{105}$ & $0$ & $0$ & $0$ & $0$ & $\frac{60}{105}$ &
$\frac{6}{105}$ & $0$ & $0$ & $\frac{24}{105}$ & $0$\\[1ex]
\hline
\\[-1.5ex]
$\widehat{\mu}$
& $0$ & $\frac{21}{105}$ & $0$ & $0$ & $0$
& $0$ & $0$ & $0$ & $0$ & $0$ & $\frac{60}{105}$ &
$0$ & $0$ & $0$ & $\frac{24}{105}$ & $0$\\[1ex]
$\widehat{Bel}$
& $0$ & $\frac{21}{105}$ & $0$ & $0$ & $0$
& $\frac{21}{105}$ & $\frac{21}{105}$ & $\frac{21}{105}$ & $0$ & $0$ & $\frac{60}{105}$ &
$\frac{21}{105}$ & $\frac{21}{105}$ & $\frac{81}{105}$ & $\frac{84}{105}$ & $1$
\end{tabular}
\end{center}
for which we have that $d_1(\widehat{Bel},\underline{Q}) = \frac{96}{105}$.

Define $Q_0 = \frac{1}{|\ext(\cl(\Q))|}\sum_{Q \in \ext(\cl(\Q))} Q$, whose values are reported below
\begin{center}
\begin{tabular}{c|cccccccccccccccc}
$\F$ & $\emptyset$ & $1$ & $2$ & $3$ & $4$ & $12$ & $13$ & $14$ & $23$ & $24$ & $34$ & $123$ & $124$ & $134$ & $234$ & $\Omega$\\
\hline
\\[-1.5ex]
$\QQ_{1,3}$
& $0$ & $\frac{15}{105}$ & $0$ & $\frac{90}{105}$ & $0$
& $\frac{15}{105}$ & $1$ & $\frac{15}{105}$ & $\frac{90}{105}$ & $0$ & $\frac{90}{105}$ &
$1$ & $\frac{15}{105}$ & $1$ & $\frac{90}{105}$ & $1$
\\[1ex]
$\QQ_{1,4}$
& $0$ & $\frac{21}{105}$ & $0$ & $0$ & $\frac{84}{105}$
& $\frac{21}{105}$ & $\frac{21}{105}$ & $1$ & $0$ & $\frac{84}{105}$ & $\frac{84}{105}$ &
$\frac{21}{105}$ & $1$ & $1$ & $\frac{84}{105}$ & $1$
\\[1ex]
$\QQ_{2,3}$
& $0$ & $0$ & $\frac{35}{105}$ & $\frac{70}{105}$ & $0$
& $\frac{35}{105}$ & $\frac{70}{105}$ & $0$ & $1$ & $\frac{35}{105}$ & $\frac{70}{105}$ &
$1$ & $\frac{35}{105}$ & $\frac{70}{105}$ & $1$ & $1$
\\[1ex]
$\QQ_{2,4}$
& $0$ & $0$ & $\frac{45}{105}$ & $0$ & $\frac{60}{105}$
& $\frac{45}{105}$ & $0$ & $\frac{60}{105}$ & $\frac{45}{105}$ & $1$ & $\frac{60}{105}$ &
$\frac{45}{105}$ & $1$ & $\frac{60}{105}$ & $1$ & $1$\\[1ex]
\hline
\\[-1.5ex]
$Q_0$
& $0$ & $\frac{36}{420}$ & $\frac{80}{420}$ & $\frac{160}{420}$ & $\frac{144}{420}$
& $\frac{116}{420}$ & $\frac{196}{420}$ & $\frac{180}{420}$ & $\frac{240}{420}$ & $\frac{224}{420}$ & $\frac{304}{420}$ &
$\frac{276}{420}$ & $\frac{260}{420}$ & $\frac{340}{420}$ & $\frac{384}{420}$ & $1$
\end{tabular}
\end{center}

Finally, for $\epsilon = \frac{1}{2}$, define $\widehat{Bel}_\epsilon = \frac{1}{2} Q_0 + \frac{1}{2} \widehat{Bel}$, whose values are reported below
\begin{center}
\begin{tabular}{c|cccccccccccccccc}
$\F$ & $\emptyset$ & $1$ & $2$ & $3$ & $4$ & $12$ & $13$ & $14$ & $23$ & $24$ & $34$ & $123$ & $124$ & $134$ & $234$ & $\Omega$\\
\hline
\\[-1.5ex]
$\widehat{Bel}_\epsilon$
& $0$ & $\frac{60}{420}$ & $\frac{40}{420}$ & $\frac{80}{420}$ & $\frac{72}{420}$
& $\frac{100}{420}$ & $\frac{140}{420}$ & $\frac{132}{420}$ & $\frac{120}{420}$ & $\frac{112}{420}$ & $\frac{272}{420}$ &
$\frac{180}{420}$ & $\frac{172}{420}$ & $\frac{332}{420}$ & $\frac{360}{420}$ & $1$
%\\[1ex]
%$\widehat{\mu}_\epsilon$
%& $0$ & $\frac{60}{420}$ & $\frac{40}{420}$ & $\frac{80}{420}$ & $\frac{72}{420}$
%& $0$ & $0$ & $0$ & $0$ & $0$ & $\frac{120}{420}$ &
%$0$ & $0$ & $0$ & $\frac{48}{420}$ & $0$
\end{tabular}
\end{center}
We have that $\widehat{Bel}_\epsilon$ is an equivalent inner approximating martingale belief function for $\underline{Q}$, furthermore, it is the lower envelope of the class of probability measures on $\F$
$$
\widehat{\Q}_\epsilon = \{Q \in \MM(\Omega,\F) \,:\, Q = (1-\epsilon)Q_0 + \epsilon Q', Q' \in \core(\widehat{Bel})\}.
$$
A direct computation shows that
$$
\CC_{\widehat{Bel}_\epsilon}\left(\frac{S_1}{S_0}\right) = \min_{Q \in \widehat{\Q}_\epsilon} \EE_Q\left(\frac{S_1}{S_0}\right) = 1+r.
$$
\hfill$\blacklozenge$
\end{example}

Despite using $d_1$ we get a linear programming problem, the main disadvantage is that the optimal solution is generally not unique, as shown in the following example.

\begin{example}
\label{ex:non-uniqued1}
Consider $\Omega = \{1,2,3,4\}$, $m_1 = 5$, $m_2 = 3$, $m_3 = 2$, $m_4 = \frac{1}{2}$ and $1+r = 4$.
According to Theorem~\ref{th:extp}, we have $I = \{1\}$, $J = \{2,3,4\}$ and $\ext(\cl(\Q)) = \{\QQ_{1,2}, \QQ_{1,3}, \QQ_{1,4}\}$ inducing $\underline{\QQ}$ and $\mu$ reported below
\begin{center}
\begin{tabular}{c|cccccccccccccccc}
$\F$ & $\emptyset$ & $1$ & $2$ & $3$ & $4$ & $12$ & $13$ & $14$ & $23$ & $24$ & $34$ & $123$ & $124$ & $134$ & $234$ & $\Omega$\\
\hline
\\[-1.5ex]
$\QQ_{1,2}$
& $0$ & $\frac{9}{18}$ & $\frac{9}{18}$ & $0$ & $0$ & $1$
& $\frac{9}{18}$ & $\frac{9}{18}$ & $\frac{9}{18}$ & $\frac{9}{18}$ & $0$ & $1$ &
$1$ & $\frac{9}{18}$ & $\frac{9}{18}$ & $1$
\\[1ex]
$\QQ_{1,3}$
& $0$ & $\frac{12}{18}$ & $0$ & $\frac{6}{18}$ & $0$
& $\frac{12}{18}$ & $1$ & $\frac{12}{18}$ & $\frac{6}{18}$ & $0$ & $\frac{6}{18}$ &
$1$ & $\frac{12}{18}$ & $1$ & $\frac{6}{18}$ & $1$
\\[1ex]
$\QQ_{1,4}$
& $0$ & $\frac{14}{18}$ & $0$ & $0$ & $\frac{4}{18}$
& $\frac{14}{18}$ & $\frac{14}{18}$ & $1$ & $0$ & $\frac{4}{18}$ & $\frac{4}{18}$ &
$\frac{14}{18}$ & $1$ & $1$ & $\frac{4}{18}$ & $1$
\\[1ex]
\hline
\\[-1.5ex]
$\underline{\QQ}$
& $0$ & $\frac{9}{18}$ & $0$ & $0$ & $0$
& $\frac{12}{18}$ & $\frac{9}{18}$ & $\frac{9}{18}$ & $0$ & $0$ & $0$ &
$\frac{14}{18}$ & $\frac{12}{18}$ & $\frac{9}{18}$ & $\frac{4}{18}$ & $1$\\[1ex]
$\mu$
& $0$ & $\frac{9}{18}$ & $0$ & $0$ & $0$
& $\frac{3}{18}$ & $0$ & $0$ & $0$ & $0$ & $0$ &
$\frac{2}{18}$ & $0$ & $0$ & $\frac{4}{18}$ & $0$
\end{tabular}
\end{center}

The following two belief functions have M\"obius inverse minimizing the distance $d_1$
\begin{center}
\begin{tabular}{c|cccccccccccccccc}
$\F$ & $\emptyset$ & $1$ & $2$ & $3$ & $4$ & $12$ & $13$ & $14$ & $23$ & $24$ & $34$ & $123$ & $124$ & $134$ & $234$ & $\Omega$\\
\hline
\\[-1.5ex]
$\widehat{\mu}_1$
& $0$ & $\frac{12}{18}$ & $0$ & $0$ & $0$ & $0$
& $0$ & $0$ & $\frac{4}{18}$ & $0$ & $0$ & $\frac{2}{18}$ &
$0$ & $0$ & $0$ & $0$
\\[1ex]
$\widehat{Bel}_1$
& $0$ & $\frac{12}{18}$ & $0$ & $0$ & $0$ & $\frac{12}{18}$
& $\frac{12}{18}$ & $\frac{12}{18}$ & $\frac{4}{18}$ & $0$ & $0$ & $1$ &
$\frac{12}{18}$ & $\frac{12}{18}$ & $\frac{4}{18}$ & $1$
\\[1ex]
\hline
\\[-1.5ex]
$\widehat{\mu}_2$
& $0$ & $\frac{11}{18}$ & $0$ & $0$ & $0$ & $\frac{3}{18}$
& $0$ & $0$ & $\frac{4}{18}$ & $0$ & $0$ & $0$ &
$0$ & $0$ & $0$ & $0$
\\[1ex]
$\widehat{Bel}_2$
& $0$ & $\frac{11}{18}$ & $0$ & $0$ & $0$ & $\frac{14}{18}$
& $\frac{11}{18}$ & $\frac{11}{18}$ & $\frac{4}{18}$ & $0$ & $0$ & $1$ &
$\frac{14}{18}$ & $\frac{11}{18}$ & $\frac{4}{18}$ & $1$
\end{tabular}
\end{center}
and it holds that $d_1(\widehat{Bel}_1,\underline{Q}) = d_1(\widehat{Bel}_2,\underline{Q}) = \frac{20}{18}$.
\hfill$\blacklozenge$
\end{example}

On the other hand, taking the distance $d_2$, then problem \eqref{eq:lp} admits a unique optimal solution as the objective function turns out to be strictly convex (see, e.g., \cite{mmv,mmv2}). In other terms, the choice of $d_2$ amounts in computing the orthogonal projection of $\underline{Q}$ onto the set of inner approximating martingale belief functions for $\underline{Q}$.

\begin{example}
Consider $\Omega$, $m_1$, $m_2$, $m_2$, $m_4$, and $1+r$ as in Example~\ref{ex:non-uniqued1}. In this case, the unique optimal solution $\widehat{Bel}$ minimizing $d_2$ has M\"obius inverse $\widehat{\mu}$ such that
$\widehat{\mu}(1) = 0.628655$,
$\widehat{\mu}(2) = 0.0087719$,
$\widehat{\mu}(12) = 0.149123$,
$\widehat{\mu}(23) = 0.18421$,
$\widehat{\mu}(234) = 0.0292399$,
and $0$ otherwise. In this case we have $d_2(\widehat{Bel},\underline{Q}) = 0.169591$.
\hfill$\blacklozenge$
\end{example}

Up to now, we have considered only the lower price assessment $\underline{\pi}(S_1) = S_0$. If we further impose to respect the upper price assessment $\overline{\pi}(S_1)= S_0$, then the notion of martingale belief function given in Definition~\ref{def:martbel} can be strengthened as follows.

\begin{definition}
A belief function $\widehat{Bel}$ on $\F$ is said:
\begin{itemize}
\item a {\bf strong martingale belief function} if
$$
(1+r)^{-1}\CC_{\widehat{Bel}}(S_1) = S_0
\quad \mbox{and} \quad
(1+r)^{-1}\CC_{\widehat{Bel}}(-S_1) = -S_0;
$$
\item an {\bf inner approximating strong martingale belief function} for $\underline{Q}$ if it is both an inner approximation for $\underline{Q}$ and a strong martingale belief function;
\item an {\bf equivalent inner approximating strong martingale belief function} for $\underline{Q}$ if it is an inner approximating strong martingale belief function for $\underline{Q}$ and $\widehat{Bel} \sim P$.
\end{itemize}
\end{definition}

Still referring to a distance $d$ defined on $\BB(\Omega,\F)$, an optimal inner approximating strong martingale belief function $\widehat{Bel}$ for $\underline{Q}$ can be found by solving the following optimization problem:
\begin{equation}
\label{eq:strongmart}
\begin{array}{c}
\minimize d(\widehat{Bel},\underline{Q})\\[1ex]
\mbox{subject to:}\\[1ex]
\left\{
\begin{array}{ll}
\widehat{Bel}(A) \ge \underline{Q}(A), & \mbox{for every $A \in \F$},\\[1ex]
(1+r)^{-1}\CC_{\widehat{Bel}}(S_1) = S_0,\\[1ex]
(1+r)^{-1}\CC_{\widehat{Bel}}(-S_1) = -S_0,\\[1ex]
\widehat{Bel} \in \BB(\Omega,\F).
\end{array}
\right.
\end{array}
\end{equation}
Also in this case, problem \eqref{eq:strongmart} can be reformulated as follows
\begin{equation}
\label{eq:lp-strong}
\begin{array}{c}
%\minimize \sum\limits_{A \in \U \setminus \{\Omega\}} 2^{|\Omega \setminus A| - 1}\widehat{\mu}(A)\\[3ex]
%\mbox{subject to:}\\[1ex]
\minimize d(\widehat{Bel},\underline{Q})\\[3ex]
\mbox{subject to:}\\[1ex]
\left\{
\begin{array}{ll}
\sum\limits_{\emptyset \neq B \subseteq A} \widehat{\mu}(B) \ge \underline{Q}(A), & \mbox{for every $A \in \U$},\\[1ex]
\sum\limits_{i=1}^n m_i \left(\sum\limits_{\{i\} \subseteq B \subseteq \{1,\ldots,i\}} \widehat{\mu}(B)\right) = 1+r,\\[3ex]
\sum\limits_{i=1}^n m_i \left(\sum\limits_{\{i\} \subseteq B \subseteq \{i,\ldots,n\}} \widehat{\mu}(B)\right) = 1+r,\\[3ex]
\sum\limits_{B \in \U} \widehat{\mu}(B) = 1,\\[2ex]
\widehat{\mu}(B) \ge 0, & \mbox{for every $B \in \U$}.
\end{array}
\right.
\end{array}
\end{equation}

The following theorem states that any inner approximating strong martingale belief function $\widehat{Bel}$ for $\underline{Q}$ is actually a probability measure belonging to $\cl(\Q)$.

\begin{theorem}
\label{th:inner-strong}
For every distance $d$ defined on $\BB(\Omega,\F)$, the set of feasible solutions of problem \eqref{eq:strongmart} is $\cl(\Q)$.
Further, if $d = d_1$ then the set of optimal solutions of problem \eqref{eq:strongmart} coincides with $\cl(\Q)$, while if $d = d_2$ then there is a unique optimal solution.
\end{theorem}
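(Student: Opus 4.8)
The plan is to first determine the feasible region of \eqref{eq:strongmart} exactly, because both optimality statements follow almost formally once this region is identified as $\cl(\Q)$. For the inclusion of $\cl(\Q)$ in the feasible set I would simply note that any $Q \in \cl(\Q)$ is a probability measure, hence a belief function, that $\CC_Q = \EE_Q$, that the martingale identity $(1+r)^{-1}\EE_Q(S_1) = S_0$ holds by definition of $\cl(\Q)$ (which yields the $-S_1$ identity by linearity of $\EE_Q$), and that $Q \ge \underline{Q}$ because $\underline{Q}$ is the lower envelope of $\cl(\Q)$. The reverse inclusion is the crux.

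For the reverse inclusion the idea is to rewrite the two strong-martingale constraints in terms of the M\"obius inverse $\widehat{\mu}$ of a feasible $\widehat{Bel}$. Since $S_1(i) = S_0 m_i$ with $m_1 > \cdots > m_n$, the payoff $S_1$ is strictly decreasing in the index, so $S_1^\l(B) = \min_{i \in B} S_1(i) = S_0 m_{\max B}$ and $(-S_1)^\l(B) = -S_0 m_{\min B}$. Substituting into $\CC_{\widehat{Bel}}(X) = \sum_{B \in \U} X^\l(B)\widehat{\mu}(B)$, the two constraints become $\sum_{B \in \U} m_{\max B}\,\widehat{\mu}(B) = 1+r$ and $\sum_{B \in \U} m_{\min B}\,\widehat{\mu}(B) = 1+r$. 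Subtracting gives $\sum_{B \in \U}(m_{\min B} - m_{\max B})\widehat{\mu}(B) = 0$. Each coefficient $m_{\min B} - m_{\max B}$ is nonnegative and is strictly positive as soon as $|B| \ge 2$ (again by strict monotonicity of the $m_i$), while $\widehat{\mu}(B) \ge 0$ because $\widehat{Bel}$ is a belief function; hence every term vanishes and $\widehat{\mu}(B) = 0$ for all $B$ with $|B| \ge 2$. Thus $\widehat{\mu}$ is carried by singletons, so $\widehat{Bel}$ is a probability measure, and the martingale identity then forces $\widehat{Bel} \in \cl(\Q)$. This nonnegativity-plus-strict-monotonicity argument is the main obstacle, in that it is the one genuinely model-specific step; everything else is formal.

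Once the feasible set is known to equal $\cl(\Q)$, the $d_1$ statement is immediate. On the feasible set $\widehat{Bel} \ge \underline{Q}$, so $d_1(\widehat{Bel},\underline{Q}) = \sum_{A \in \F}\widehat{Bel}(A) - \sum_{A \in \F}\underline{Q}(A)$; and since every feasible point is a probability measure $Q$, one has $\sum_{A \in \F} Q(A) = \sum_{i}q_i\,|\{A : i \in A\}| = 2^{n-1}$, independently of $Q$. Hence $d_1(\cdot,\underline{Q})$ is constant on $\cl(\Q)$, so every feasible point is optimal and the optimal set is all of $\cl(\Q)$.

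For $d_2$, I would invoke strict convexity. The map $\widehat{\mu} \mapsto (\widehat{Bel}(A))_{A \in \F}$ is the invertible M\"obius transform, hence linear and injective, so $d_2(\widehat{Bel},\underline{Q}) = \sum_{A \in \F}(\widehat{Bel}(A) - \underline{Q}(A))^2$ is a strictly convex function of $\widehat{\mu}$. As the feasible set is nonempty, convex, and compact (it equals $\cl(\Q)$, the intersection of the simplex with the martingale hyperplane), a strictly convex continuous function attains its minimum at a unique point, giving the uniqueness claim.
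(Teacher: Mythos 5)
Your proposal is correct and follows essentially the same route as the paper: the crux in both is subtracting the two Choquet martingale constraints written via the M\"obius inverse (your $\sum_{B\in\U}(m_{\min B}-m_{\max B})\widehat{\mu}(B)=0$ is exactly the paper's equation with coefficients $\max_{i\in B}m_i-\min_{i\in B}m_i$), concluding from nonnegativity that $\widehat{\mu}$ is carried by singletons, hence the feasible set is $\cl(\Q)$; the $d_1$ constancy computation ($2^{n-1}$ minus a constant) and the strict-convexity argument for $d_2$ also match the paper's.
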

\begin{proof}
First notice that inner approximating strong martingale belief functions for $\underline{Q}$, that is feasible solutions of problem \eqref{eq:strongmart}, are in one-to-one correspondence with feasible solutions of problem \eqref{eq:lp-strong}.
Define ${\cal V} = \U \setminus\{\{i\} \,:\, i \in \Omega\}$. Subtracting memberwise the second equation to the third equation of problem \eqref{eq:lp-strong} we get
$$
\sum_{B \in {\cal V}} \left(\max_{i \in B} m_i - \min_{i \in B} m_i \right) \widehat{\mu}(B) = 0.
$$
Hence, since for every $B \in {\cal V}$, $\left(\max\limits_{i \in B} m_i - \min\limits_{i \in B} m_i\right) > 0$ and $\widehat{\mu}(B) \ge 0$, any feasible solution of problem \eqref{eq:lp-strong} is such that $\widehat{\mu}(B) = 0$, for every $B \in {\cal V}$. In turn, this implies that any feasible solution of problem \eqref{eq:lp-strong} is the M\"obius inverse of a probability measure which is a feasible solution of problem \eqref{eq:strongmart}. Thus, if $\widehat{Bel}$ is a feasible solution of problem  \eqref{eq:strongmart} we have $\CC_{\widehat{Bel}}\left(\frac{S_1}{S_0}\right) = \EE_{\widehat{Bel}}\left(\frac{S_1}{S_0}\right) = 1+r$, implying that $\widehat{Bel} \in \cl(\Q)$. Vice versa, every element of $\cl(\Q)$ is easily seen to be a feasible solution of problem \eqref{eq:strongmart}.

If $d = d_1$, since every feasible solution $\widehat{Bel}$ of problem \eqref{eq:strongmart} is a probability measure with M\"obius inverse $\widehat{\mu}$, by \eqref{eq:d1-simp} we get that
$$
d_1(\widehat{Bel},\underline{Q}) =
\sum\limits_{\{i\} \in \U} 2^{|\Omega \setminus \{i\}|}\widehat{\mu}(\{i\}) - \sum\limits_{A \in \U}\underline{Q}(A)
= 2^{n-1} - \sum\limits_{A \in \U}\underline{Q}(A),
$$
that does not depend on $\widehat{Bel}$. Hence, all the elements of $\cl(\Q)$ are optimal according to $d_1$.

If $d = d_2$, then the uniqueness of the optimal solution immediately follows since the objective function of \eqref{eq:strongmart} is strictly convex.
\hfill$\square$
\end{proof}

Hence, using $d_1$ any element of $\cl(\Q)$ turns out to be optimal, while using $d_2$ we get the orthogonal projection of $\underline{Q}$ onto the set of inner approximating strong martingale belief functions for $\underline{Q}$, which is by Theorem~\ref{th:inner-strong} the set $\cl(\Q)$.

\begin{example}
Consider $\Omega$, $m_1$, $m_2$, $m_3$, $m_4$, and $1+r$ as in Example~\ref{ex:non-uniqued1}. Using $d_1$, the set of optimal inner approximating strong martingale belief functions for $\underline{Q}$ is $\cl(\Q)$ and for every $\widehat{Bel} \in \cl(\Q)$
we have $d_1(\widehat{Bel},\underline{Q}) = \frac{8}{3}$.

On the other hand, if we use the distance $d_2$ we have a unique optimal solution which is the following inner approximating strong martingale belief function (probability measure)
\begin{center}
\begin{tabular}{c|cccc}
$\Omega$ & $1$ & $2$ & $3$ & $4$ \\
\hline
\\[-1.5ex]
$\widehat{Bel}$ &
$0.638889$ &
$0.188596$ &
$0.102339$ &
$0.0701755$
\end{tabular}
\end{center}
for which we have $d_2(\widehat{Bel},\underline{Q}) = 0.572124$.
\hfill$\blacklozenge$
\end{example}

Let us stress that, for a fixed $Q_0 \in \Q$, if $\widehat{Bel}$ is an inner approximating strong martingale belief function  (probability measure), then $\core(\widehat{Bel}) = \{\widehat{Bel}\}$. In this case $\widehat{\Q}_\epsilon$ reduces to the singleton
$\widehat{\Q}_\epsilon = \{(1-\epsilon)Q_0 + \epsilon \widehat{Bel}\}$, thus the lower envelope $\underline{Q}_\epsilon$ is an equivalent martingale measure, that is $\underline{Q}_\epsilon = (1-\epsilon)Q_0 + \epsilon \widehat{Bel} \in \Q$.

The following proposition states that, both for $d_1$ and $d_2$, an optimal inner approximating martingale (strong martingale) belief function does not dominate any other inner approximating martingale (strong martingale) belief function. This is inline with results proved in \cite{mmv,mmv2}, where the problem of finding an outer approximating belief function for a lower probability is studied.

\begin{proposition}
Let $d = d_1$ or $d = d_2$. If $\widehat{Bel}$ is an optimal solution of problem \eqref{eq:op} (problem \eqref{eq:strongmart}) then there is no feasible solution $\widehat{Bel}'$ of problem \eqref{eq:op} (problem \eqref{eq:strongmart}) such that $\widehat{Bel}' \neq \widehat{Bel}$ and $\u{Q} \le \widehat{Bel}' \le \widehat{Bel}$.
\end{proposition}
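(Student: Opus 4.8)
The plan is to argue by contradiction, exploiting the fact that both $d_1$ and $d_2$ are strictly monotone with respect to the pointwise order along the ``direction'' pointing away from $\u{Q}$. So I would begin by assuming $\widehat{Bel}$ is optimal for problem \eqref{eq:op} (resp.\ \eqref{eq:strongmart}) and that, contrary to the claim, there exists a feasible $\widehat{Bel}' \neq \widehat{Bel}$ with $\u{Q} \le \widehat{Bel}' \le \widehat{Bel}$. Since $\widehat{Bel}'$ and $\widehat{Bel}$ are distinct belief functions they must differ on some event, and because $\widehat{Bel}' \le \widehat{Bel}$ this differing value is forced to be a strict inequality $\widehat{Bel}'(A^*) < \widehat{Bel}(A^*)$ for at least one $A^* \in \F$.

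The core of the argument is a termwise comparison of the two distances. For every $A \in \F$ the sandwich $\u{Q} \le \widehat{Bel}' \le \widehat{Bel}$ gives $0 \le \widehat{Bel}'(A) - \u{Q}(A) \le \widehat{Bel}(A) - \u{Q}(A)$. I would then note that for $d_1$ the absolute values collapse to exactly these nonnegative differences, so each summand of $d_1(\widehat{Bel}',\u{Q})$ is dominated by the corresponding summand of $d_1(\widehat{Bel},\u{Q})$; for $d_2$ the same ordering survives squaring, since $t \mapsto t^2$ is increasing on $[0,\infty)$. At the event $A^*$ this comparison is strict in both cases.

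Summing over $A \in \F$ then yields $d(\widehat{Bel}',\u{Q}) < d(\widehat{Bel},\u{Q})$ for $d \in \{d_1,d_2\}$, and since $\widehat{Bel}'$ is feasible by hypothesis this contradicts the optimality of $\widehat{Bel}$. I expect the identical chain of inequalities to dispatch both problems simultaneously, as they share the objective $d(\cdot,\u{Q})$ and the inner-approximation constraint $\widehat{Bel} \ge \u{Q}$, with the feasibility of $\widehat{Bel}'$ assumed throughout; the extra martingale or strong-martingale constraints play no role beyond being satisfied by $\widehat{Bel}'$. There is no genuine obstacle here: the only delicate point to articulate is that distinctness together with domination guarantees a strictly positive gap at some $A^*$, which is precisely what promotes the weak termwise inequalities to a strict inequality between the two totals.
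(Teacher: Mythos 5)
Your proof is correct, and it is essentially the argument the paper has in mind: the paper does not spell out a proof but defers to ``a straightforward adaptation of Lemma~14 in \cite{mmv}'', which is precisely the termwise monotonicity comparison you carry out (the sandwich $0 \le \widehat{Bel}'(A)-\u{Q}(A) \le \widehat{Bel}(A)-\u{Q}(A)$, strict at some $A^*$ by distinctness and domination, summed over the finite $\F$ to contradict optimality). Your write-up has the minor merit of being self-contained, including the observation that strictness survives squaring for $d_2$ because both differences are nonnegative and the larger one is strictly positive.
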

\begin{proof}
The proof can be obtained by a straightforward adaptation of Lemma~14 in \cite{mmv}.
\hfill$\square$
\end{proof}

\section{Conclusions}
\label{sec:conclusions}
In this paper we characterize the lower envelope of the set of equivalent martingale measures arising in a one-period $n$-nomial market model, showing that it is a belief function. This suggests to use such lower envelope to derive a lower pricing rule accommodating frictions in the market, in the form of bid-ask spreads.

For that we formulate a general one-period pricing problem and prove a version of the first fundamental theorem of asset pricing in the context of belief functions. The theorem relies on a generalized definition of arbitrage assuming partially resolving uncertainty, according to  Jaffray.

Finally, we cope with the derivation of a generalized arbitrage-free lower pricing rule stemming from the ``risk-neutral'' belief function $\underline{Q}$ arising in the one-period $n$-nomial market model.
This amounts in choosing an equivalent martingale measure and in producing an $\epsilon$-contamination relying on a suitable inner approximation of $\underline{Q}$.

As a topic of future research, we aim at extending the introduced notion of arbitrage to the multi-period case. For this to be possible, the issue of dynamic consistency needs to be taken into account \cite{klr-dynamic,asano}.

\begin{acknowledgements}
The last two authors are members of the GNAMPA-INdAM research group. The second author was supported by Università degli Studi di Perugia, Fondo Ricerca di Base 2019, project ``Modelli per le decisioni economiche e finanziarie in condizioni di ambiguità ed imprecisione''.
\end{acknowledgements}

%\begin{acknowledgements}
%If you'd like to thank anyone, place your comments here
%and remove the percent signs.
%\end{acknowledgements}

% BibTeX users please use one of
%\bibliographystyle{spbasic}      % basic style, author-year citations
%\bibliographystyle{spmpsci}      % mathematics and physical sciences
%\bibliographystyle{spphys}       % APS-like style for physics
%\bibliography{}   % name your BibTeX data base

% Non-BibTeX users please use
%\begin{thebibliography}{}
%

\bibliographystyle{spmpsci}
\bibliography{biblio}

% and use \bibitem to create references. Consult the Instructions
% for authors for reference list style.
%
%\bibitem{RefJ}
%% Format for Journal Reference
%Author, Article title, Journal, Volume, page numbers (year)
%% Format for books
%\bibitem{RefB}
%Author, Book title, page numbers. Publisher, place (year)
%% etc
%\end{thebibliography}

\end{document}